\theoremstyle{plain}
\newtheorem{X}{X}[section]
\newtheorem{corollary}[X]{Corollary}
\newtheorem{definition}[X]{Definition}
\newtheorem{lemma}[X]{Lemma}
\newtheorem{proposition}[X]{Proposition}
\newtheorem{theorem}[X]{Theorem}
\theoremstyle{definition}
\newtheorem{remark}[X]{Remark}
\title{Residue classes containing an unexpected number of primes}
\author{Daniel Fiorilli}
\address{D\'epartement de math\'ematiques et de statistique \\ Universit\'e de Montr\'eal \\ CP 6128, succ.\ Centre-ville \\ Montr\'eal, QC \\ Canada H3C 3J7}
\email{fiorilli@dms.umontreal.ca}
\begin{document}

\begin{abstract}
\noindent We fix a non-zero integer $a$ and consider arithmetic progressions $a \bmod q$, with $q$ varying over a given range. We show that for certain specific values of $a$, the arithmetic progressions $a\bmod q$ contain, on average, significantly fewer primes than expected. 
\end{abstract}

\maketitle

\section{Introduction}

 The prime number theorem for arithmetic progressions asserts that $$\psi(x;q,a)\sim \psi(x)/\phi(q)$$ for any $a$ and $q$ such that $(a,q)=1$. Another way to say this is that the primes are equidistributed in the $\phi(q)$ arithmetic progressions $a \bmod q$ with $(a,q)=1$. 

 Fix an integer $a\neq 0$. We will be interested in the number of primes in the arithmetic progressions $a \bmod q$ with $q$ varying in certain ranges, and we will show that for specific values of $a$, there are significantly fewer primes in these arithmetic progressions than in typical arithmetic progressions. Consider the average value of $\psi(x;q,a)- \psi(x)/\phi(q)$ over $q$. One might expect that no matter what the value of $a$ is, the cancellations in these oscillating terms will force the average to be very small. However it turns out that the average is highly dependent on the arithmetical properties of $a$.

Here is the main result of the paper.

\begin{theorem}
\label{premier théorème}
Fix an integer $a\neq 0$ and let $M=M(x) \leq (\log x)^B$  where $B>0$ is a fixed real number. The average error term in the usual approximation for the number of primes $p\equiv a \bmod q$ with $p\leq x$, $p\neq a$, where $(q,a)=1$ and $q\leq x/M$, is

\begin{equation}
\label{enonce premier thm}
\frac1{\frac{\phi(a)}{a}\frac{x}{M}  }\sum_{\substack{q\leq \frac{x} {M}  \\(q,a)=1}} \left( \psi(x;q,a)-\Lambda(a)-\frac{\psi(x)}{\phi(q)}\right) = \mu(a,M)+O_{a,\epsilon,B} \left(\frac{1}{M^{\frac {205}{538}-\epsilon}}\right)
\end{equation}
with $$ \mu(a,M):=  \begin{cases}
                     0 &\text{ if } \omega(a)\geq 2 \\
			-\frac 12 \log p &\text{ if } a=\pm p^e \\
			-\frac 12 \log M - C_5 &\text{ if } a=\pm 1,
                    \end{cases}
$$
where
$$ C_5:= \frac 12 \left(\log 2\pi + \gamma +\sum_p \frac {\log p}{p(p-1)}+1 \right).$$

\end{theorem}

\begin{remark}
 Assuming Lindelöf's hypothesis, we can replace the error term in (\ref{enonce premier thm}) by 
 $O_{a,\epsilon,B}\left( \frac{1}{M^{1/2-\epsilon}}\right).$
\end{remark}

\begin{remark}
We subtracted $\Lambda(a)$ from $\psi(x;q,a)$ in (\ref{enonce premier thm}) because the arithmetic progression $a \bmod q$ contains the prime power $p^e$ for all $q$ if $a=p^e$. 
\end{remark}

\begin{remark}
\label{remarque friedlander}
It may be preferable to replace $\psi(x)$ by $\psi(x,\chi_0)$ in Theorem \ref{premier théorème}, since the quantity $$\psi(x;q,a)- \psi(x,\chi_0)/\phi(q)$$ is the discrepancy (with signs) of the sequence of primes in the reduced residue classes mod $q$. One can do this with a negligible error term.
\end{remark}

\section*{Acknowledgements}

I would like to thank my supervisor Andrew Granville for many helpful comments and his advice in general, as well as my colleagues Farzad Aryan, Mohammad Bardestani, Tristan Freiberg and Kevin Henriot for many fruitful conversations. I would also like to thank John Friedlander for suggesting Remark \ref{remarque friedlander} and for other helpful comments. L'auteur est titulaire d'une bourse doctorale du Conseil de recherches en sciences naturelles et en g\'enie du Canada.

\section{Past Results}

The study of the discrepancy $\psi(x;q,a)-x/\phi(q)$ on average has been a fruitful subject over the past decades. For example, the celebrated theorem of Bombieri-Vinogradov gives a bound on the sum of the mean absolute value of the maximum of this discrepancy over all $1\leq a <q$ with $(a,q)=1$, summed over $q\leq x^{1/2-o(1)}$. The Hooley-Montgomery refinement of the Barban-Davenport-Halberstam Theorem gives an estimation of the variance of $\psi(x;q,a)-x/\phi(q)$, again for all values of $a$ in the range $1\leq a <q$ with $(a,q)=1$ for $q<x/(\log x)^A$. The mean value of $\psi(x;q,a)-x/\phi(q)$ was studied for fixed values of $a$ for $q\geq x^{1/2}$ (see \cite{BFI},\cite{fouvry} or \cite{FG}), and bounds on this mean value turned out to be applicable to Titchmarsh's divisor problem, first solved by Linnik. The best result so far for this problem was obtained by Friedlander and Granville.

\begin{theorem}[Friedlander, Granville]
\label{FG}
Let $0<\lambda<1/4$, $A>0$ be given. Then uniformly for $0<|a|<x^{\lambda}$, $2\leq Q \leq x/3$ we have 
\begin{equation}
\sum_{\substack{Q<q\leq 2Q \\ (q,a)=1}} \left( \psi(x;q,a)-\frac x{\phi(q)} \right) \ll_{\lambda,A} 2^{\omega(a)} Q \log(x/Q) + \frac x{(\log x)^A} +Q\log |a|.
\end{equation}
\end{theorem}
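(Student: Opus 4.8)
The plan is to work directly with $S:=\sum_{Q<q\le 2Q,\,(q,a)=1}\psi(x;q,a)$ and to recover the claimed main term at the very end; one may first replace each $x/\phi(q)$ by $\psi(x)/\phi(q)$, at a cost of $(x-\psi(x))\sum_{Q<q\le 2Q}1/\phi(q)\ll x\exp(-c\sqrt{\log x})\ll x/(\log x)^A$ since $\sum_{Q<q\le 2Q}1/\phi(q)=O(1)$. The argument splits at the level $x^{1/2}$. If $Q\le x^{1/2}(\log x)^{-B}$ for a suitable $B=B(A)$, the Bombieri--Vinogradov theorem (in the form with the maximum over the right endpoint) bounds $\sum_{q\le 2Q}\max_{y\le x}\max_{(b,q)=1}|\psi(y;q,b)-\psi(y)/\phi(q)|$ by $\ll x/(\log x)^A$, which dominates $|S-\psi(x)\sum_q 1/\phi(q)|$ outright; here $|a|<x^\lambda$ makes the terms $Q\log|a|$ and $2^{\omega(a)}Q\log(x/Q)$ irrelevant. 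So assume from now on $x^{1/2}(\log x)^{-B}<Q\le x/3$.

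Next I would switch divisors: $S=\sum_{n\le x,\,n\ne a}\Lambda(n)\,\#\{q:Q<q\le 2Q,\ q\mid n-a,\ (q,a)=1\}$, the excluded term $n=a$ (present only for $a=\pm p^e$ with $a>0$) contributing $\Lambda(a)(Q+O(1))\ll Q\log|a|$ and nothing when $\omega(a)\ge 2$. Since $Q\gg x^{1/2}>|a|$, a nonzero contribution forces $n-a>0$, so writing $n-a=qr$ and detecting $(q,a)=1$ by Möbius over $d\mid a$,
\[
S=\sum_{1\le r<(x-a)/Q}\ \sum_{Q<q\le R_r}\Lambda(qr+a)+O\Big(\tfrac{x}{(\log x)^A}+Q\log|a|\Big),\qquad R_r:=\min\big(2Q,\tfrac{x-a}{r}\big),
\]
because the $d>1$ pieces force $qr+a$ to be a prime power dividing $a$ and contribute only $\ll 2^{\omega(a)}\omega(a)(\log x)^2\cdot x/Q\ll x^{1/2+o(1)}$ in total. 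Crucially $r<(x-a)/Q\le x^{1/2}(\log x)^{B}$. The inner sum $\Psi_r:=\sum_{Q<q\le R_r}\Lambda(qr+a)$ is, up to a negligible correction for prime powers dividing $\gcd(a,r)$, the count $\psi(R_rr+a;r,a)-\psi(Qr+a;r,a)$ of $\Lambda$ over the interval $(Qr+a,R_rr+a]$ in the progression $a\bmod r$. The case $a<0$ is analogous, with $n+|a|$ in place of $n-a$ and no exceptional term to remove.

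For the main term, write $\Psi_r=\dfrac{\psi(R_rr+a)-\psi(Qr+a)}{\phi(r)}+E_r$ when $(r,a)=1$ (and $\Psi_r\ll\omega(a)\log x$ otherwise). Since $Qr\gg x^{1/2}$ and $\psi(y)=y+O(y\exp(-c\sqrt{\log y}))$, the sum of the main pieces is $\sum_{r,\,(r,a)=1}\tfrac{r}{\phi(r)}(R_r-Q)+O(x/(\log x)^A)$, and as $R_r-Q=\#\{Q<q\le 2Q:qr\le x-a\}+O(1)$ this equals $\sum_{Q<q\le 2Q}\sum_{r\le (x-a)/q,\,(r,a)=1}\tfrac{r}{\phi(r)}+O(x/Q)$. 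The identity $\tfrac{n}{\phi(n)}=\sum_{d\mid n}\tfrac{\mu^2(d)}{\phi(d)}$ gives $\sum_{r\le y,\,(r,a)=1}\tfrac{r}{\phi(r)}=\tilde g(a)\,y+O(2^{\omega(a)}\log y)$ with $\tilde g(a)=\tfrac{\phi(a)}{a}\prod_{p\nmid a}\big(1+\tfrac1{p(p-1)}\big)$, and the same $\tilde g(a)$ is the coefficient of $\log y$ in $\sum_{q\le y,\,(q,a)=1}1/\phi(q)$; summing over $q$ and using $\sum_{Q<q\le 2Q}1/q=\log 2+O(1/Q)$ then yields $\tilde g(a)\,x\log 2+O\big(2^{\omega(a)}Q\log(x/Q)+x/Q+|a|\big)=\psi(x)\sum_{Q<q\le 2Q,\,(q,a)=1}\tfrac1{\phi(q)}+O\big(2^{\omega(a)}Q\log(x/Q)+x/(\log x)^A\big)$. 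This is exactly the main term one must match, and the error terms above are all admissible, so modulo $\sum_r E_r$ the theorem follows. (This is where the factor $2^{\omega(a)}$ and the $\log(x/Q)$ enter: from the sieved sum $\sum_{r\le x/Q,(r,a)=1}r/\phi(r)$.)

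The remaining quantity $\sum_{1\le r<(x-a)/Q}E_r$ --- an \emph{unweighted} sum of Bombieri--Vinogradov error terms for the fixed residue $a$, over moduli $r$ up to $x^{1/2}(\log x)^{B}$ --- is the main obstacle. When $Q\ge x^{1/2}(\log x)^{B}$ one has $r<x^{1/2}(\log x)^{-B}$ and the classical Bombieri--Vinogradov theorem bounds it by $\ll x/(\log x)^A$ at once. The delicate case is the logarithmically thin window $x^{1/2}(\log x)^{-B}<Q<x^{1/2}(\log x)^{B}$, where the moduli reach slightly past $x^{1/2}$ while the intervals $(Qr+a,R_rr+a]$ can be as long as $\asymp x$, so the trivial bound on $E_r$ is far too lossy; here one must invoke a level-of-distribution estimate a little beyond $x^{1/2}$ for the single progression $a\bmod r$, of Bombieri--Friedlander--Iwaniec type. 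This is precisely where the hypothesis $0<\lambda<1/4$ is used, to secure the needed uniformity in $|a|\le x^\lambda$. Everything else is routine bookkeeping.
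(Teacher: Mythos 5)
This is a statement the paper does not prove: it is quoted from Friedlander--Granville \cite{FG} and used as a black box, and the paper explicitly notes that its proof ``makes use of the dispersion method combined with Fourier analysis and involved estimates on Kloosterman sums,'' refining the deep results of Bombieri--Friedlander--Iwaniec \cite{BFI} and Fouvry \cite{fouvry}. So there is no in-paper argument to compare yours against.

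Your elementary reduction is essentially the right skeleton: replacing $x$ by $\psi(x)$, the direct Bombieri--Vinogradov treatment for $Q\le x^{1/2}(\log x)^{-B}$, the divisor switch $n-a=qr$ for large $Q$, the Siegel--Walfisz/PNT evaluation of the switched main term, and the identification of your $\tilde g(a)$ with the paper's $C_1(a)$ so that the two main terms cancel up to $O(2^{\omega(a)}Q\log(x/Q))$; your accounting for the $n=a$ term (source of $Q\log|a|$) and for the $d>1$ M\"obius pieces is also correct. But there is a genuine gap exactly where you flag one: in the window $x^{1/2}(\log x)^{-B}<Q<x^{1/2}(\log x)^{B}$ neither the moduli $q$ nor the switched moduli $r$ lie inside the Bombieri--Vinogradov range, and ``invoke a level-of-distribution estimate a little beyond $x^{1/2}$ for the single progression $a\bmod r$, of Bombieri--Friedlander--Iwaniec type'' is not a proof step --- it is the entire content of the theorem. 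Establishing that input requires the dispersion method and Kloosterman/Weil sum estimates, and it is also where the uniformity in $|a|\le x^{\lambda}$ with $\lambda<1/4$ must be secured. As written, your argument proves the statement only for $Q$ outside a logarithmic neighbourhood of $x^{1/2}$ and reduces the remaining (critical) case to an unproved hypothesis of strength comparable to the theorem itself.
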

\begin{remark}
 If $a$ is not a prime power the term $Q\log |a|$ may be deleted.
\end{remark}
Theorem \ref{FG} is a refinement of the deep results of Bombieri-Friedlander-Iwaniec \cite{BFI} and of Fouvry \cite{fouvry}, and makes use of the dispersion method combined with Fourier analysis and involved estimates on Kloosterman sums. 

The main method used in our paper, which we will refer to as the "divisor switching" technique, stemmed from the work of Dirichlet on the divisor problem. Variants of his "hyperbola method" were subsequently used in many different contexts, and have become a very important tool in analytic number theory. The variant which will be used in this paper is very similar to that of Hooley \cite{hooley}.
 

%
%
%
%
%

\section{Main results}

What we will actually prove in Section \ref{section further results} is a uniform version of Theorem \ref{premier théorème} (which is too technical to state at this point), and we will see that the proportion of integers $|a|\leq x^{\frac 14-\epsilon}$ for which we get an error term of $O(M^{-\eta})$ in \eqref{enonce premier thm} is at least $(1-4\eta)e^{-\gamma}$ (here and throughout, $\gamma$ denotes the Euler-Mascheroni constant). 

A number of constants will appear in the paper. 

\begin{definition}
We define
 $$C_1(a):=\frac{ \zeta(2)\zeta(3)}{\zeta(6)}\frac {\phi(a)}{a}\prod_{p\mid a} \left( 1-\frac 1 {p^2-p+1}\right),$$
$$C_3(a):=C_1(a)\left(\gamma -1 - \sum_{p} \frac {\log p} {p^2-p+1}+\sum_{p\mid a} \frac{p^2\log p}{(p-1)(p^2-p+1)}\right), $$
$$ C_5:= \frac 12 \left(\log 2\pi + \gamma +\sum_p \frac {\log p}{p(p-1)}+1 \right).$$
 
\end{definition}

We will denote by $\omega(a)$ the number of distinct prime factors of $a$. Note that provided $0\neq|a|\leq x$ and $\sum_{d\mid a}1=o\left( \frac xM \frac{\phi(a)}a\right)$, there are $\sim \frac{ \phi(a)}{a} \frac xM$ terms in the sum over $1\leq q \leq x/M$ with the condition $(q,a)=1$.




The results of Theorem \ref{premier théorème} become worse as $M$ gets smaller. The reason for this is that when $M=O(1)$ (that is the relevant arithmetic progressions have bounded length), the estimates become dramatically different. This case is handled in the next Proposition.


\begin{proposition}
\label{M=1}
Fix $A$ and $\lambda<\frac 14$, two positive real numbers. For $a$ in the range $0<|a|\leq x^{\lambda}$ such that $\omega(a)\leq 10\log \log x$ we have 
\begin{equation}
\frac 1{\frac{\phi(a)}{a}x}\sum_{\substack{q\leq  x   \\(q,a)=1}} \left( \psi(x;q,a)-\Lambda(a)-\frac{\psi(x)}{\phi(q)} \right) = \frac a{\phi(a)}C_3(a)+O_{A,\lambda}\left( \frac 1 {(\log x)^A} \right).
\end{equation}
More generally, for $M\geq 1$ a fixed integer we have 
 \begin{equation}
\frac{1}{\frac{\phi(a)}{a} \frac xM}\sum_{\substack{q\leq \frac x M  \\(q,a)=1}} \left( \psi(x;q,a)-\Lambda(a)-\frac{\psi(x)}{\phi(q)} \right) =\mu'(a,M) +O_{A,\lambda}\left( \frac {1} {(\log x)^A} \right),
\end{equation}
where $$\mu'(a,M):= \frac a{\phi(a)}M\Bigg(C_1(a)\log M+C_3(a)-\sum_{\substack{r\leq M\\(r,a)=1}} \frac 1 {\phi(r)} \left( 1- \frac rM\right)\Bigg).$$
\end{proposition}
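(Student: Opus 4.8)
The starting point is the classical divisor-switching identity. We write $\psi(x;q,a) = \sum_{\substack{n \le x \\ n \equiv a \,(q)}} \Lambda(n)$, isolate the term $n = a$ (which contributes $\Lambda(a)$, justifying that subtraction), and for the remaining terms write $n = a + qm$ with $1 \le m \le (x-a)/q$. Summing over $q \le x/M$ with $(q,a)=1$ and swapping the order of summation, the pair $(q,m)$ with $qm \le x - a$ lies under a hyperbola; Dirichlet's hyperbola method lets us re-express the sum over the region $q \le x/M$ in terms of a sum over the complementary region $m \le M$ (roughly), at the cost of a main term coming from the "corner" near $q \approx x/M$, $m \approx M$. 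Concretely, $\sum_{q \le x/M}\sum_{m \le (x-a)/q}\Lambda(a+qm)$ gets rewritten as $\sum_{m \le M}\sum_{q}\Lambda(a+qm) + (\text{boundary main term})$, and the inner sum over $q$ in the new variable is now a sum of $\Lambda(a + qm)$ over $q$ in an interval — i.e. a prime-counting sum in the progression $a \bmod m$, evaluated via $\psi(\cdot; m, a)$ on a long range where Siegel--Walfisz applies (this is where $\omega(a) \le 10\log\log x$ and $|a| \le x^\lambda$ with $\lambda < 1/4$, hence $m \le M = O(1)$ or $M \le (\log x)^B$, keep the moduli tiny and the error under control). The $\psi(\cdot;m,a)$ terms each contribute $\sim \psi/\phi(m)$ up to $x/(\log x)^A$ errors, and one must also subtract the $\psi(x)/\phi(q)$ term in the statement, for which $\sum_{q \le x/M, (q,a)=1} 1/\phi(q)$ is evaluated by a standard mean-value estimate for $1/\phi$.

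The heart of the computation is then assembling the main term. After switching, one obtains an expression of the shape $\psi(x) \cdot \frac{1}{\frac{\phi(a)}{a}\frac{x}{M}}\big(\sum_{m \le M, (m,a)=1}\frac{1}{\phi(m)}(\text{linear-in-}x\text{ weight}) - \sum_{q \le x/M,(q,a)=1}\frac{1}{\phi(q)}\big)$ plus lower-order pieces. The weight attached to each $m \le M$ comes from the length of the $q$-range, which is $(x - a)/m$ truncated at $x/M$, producing the factor $1 - \frac{m}{M}$ (when $m \le M$; for larger $m$ the two pieces cancel). This is precisely the origin of the term $\sum_{r \le M, (r,a)=1}\frac{1}{\phi(r)}(1 - \frac{r}{M})$ in $\mu'(a,M)$. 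The constants $C_1(a)$ and $C_3(a)$ emerge from the asymptotic expansion of $\sum_{q \le y, (q,a)=1} \frac{1}{\phi(q)}$: one has $\sum_{q \le y,(q,a)=1}\frac{1}{\phi(q)} = \frac{a}{\phi(a)}C_1(a)(\log y + \text{const}) + \dots$, where $C_1(a)$ is the arithmetic factor $\frac{\zeta(2)\zeta(3)}{\zeta(6)}\frac{\phi(a)}{a}\prod_{p\mid a}(1 - \frac{1}{p^2-p+1})$ coming from writing $\frac{1}{\phi(q)}$ as a Dirichlet convolution, and $C_3(a)$ packages the associated constant term (the $\gamma - 1$, the sum over all primes, and the correction sum over $p \mid a$). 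Taking $y = x/M$ versus summing the $m \le M$ contribution and extracting the difference of the two logarithms gives the $C_1(a)\log M$ term, while the constant terms combine into $C_3(a)$; the $M$ in front of the whole bracket is the normalization mismatch between dividing by $\frac{\phi(a)}{a}\frac{x}{M}$ and the natural scale $\frac{\phi(a)}{a}x$ of the switched sum. The first displayed equation is just the case $M = 1$, where the sum $\sum_{r \le 1, (r,a)=1}\frac{1}{\phi(r)}(1 - r) = 0$ and $C_1(a)\log 1 = 0$, leaving $\frac{a}{\phi(a)}C_3(a)$.

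I would organize the proof as: (i) state the divisor-switching lemma precisely and apply it to our sum; (ii) handle the switched inner sums by Siegel--Walfisz, collecting all genuinely small error terms into $O(x/(\log x)^A)$ — here the hypotheses $|a| \le x^\lambda$, $\lambda < 1/4$, and $\omega(a) \le 10\log\log x$ are used to bound the number of divisors of $a$, the contribution of small $m$, and the exceptional moduli; (iii) evaluate $\sum_{q \le y,(q,a)=1}\frac{1}{\phi(q)}$ with explicit constant, which is a known formula I would cite or derive via $\frac{1}{\phi(n)} = \sum_{d\mid n}\frac{\mu^2(d)}{\phi(d)}\cdot\frac{1}{n}$-type manipulation; (iv) combine everything and simplify the main term into the stated closed form. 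The main obstacle is bookkeeping in step (iv): keeping the boundary term from the hyperbola method, the truncation weight $1 - m/M$, the two $1/\phi$-sums, and the normalization factor all consistent, and verifying that the constant $C_3(a)$ as defined really is what falls out — in particular that the extra sum $\sum_{p \mid a}\frac{p^2 \log p}{(p-1)(p^2-p+1)}$ correctly accounts for the restriction $(q,a)=1$ in the $1/\phi$ mean value. A secondary subtlety is making the error term uniform in $M$ up to fixed size and ensuring the $\Lambda(a)$ and $\psi(x)/\phi(q)$ subtractions are tracked exactly rather than absorbed.
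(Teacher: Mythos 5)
There is a genuine gap at the very first step: the divisor switch does not do what you claim. Writing $p-a=qm$, the switch converts the sum over \emph{large} moduli $x/M<q\le x$ into a sum over the \emph{small} complementary variable $r\lesssim M$ (this is Lemma \ref{interchanger les diviseurs}); it does not convert the sum over $q\le x/M$ into a sum over $m\le M$. Those two regions under the hyperbola $qm\le x-a$ are complementary, so passing from one to the other requires the full sum over all pairs, i.e.\ essentially $\sum_{n\le x}\Lambda(n)\tau(n-a)$ --- the Titchmarsh divisor sum, which is exactly as hard as the statement being proved (compare Corollary \ref{Titchmarsh}, which the paper \emph{deduces} from this proposition). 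In particular the range $q\asymp x^{1/2}$ is self-dual under the switch and can never be reduced to moduli where Siegel--Walfisz applies; since your plan invokes only Siegel--Walfisz, which reaches moduli up to $(\log x)^{O(1)}$, it cannot account for the overwhelming majority of the moduli $q\le x/M$.

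The missing ingredient is Theorem \ref{FG} (Friedlander--Granville, resting on Bombieri--Friedlander--Iwaniec and Fouvry). The paper first disposes of all $q\le x/L$ with $L=(\log x)^{A+3}$ by that theorem, obtaining an error $O(2^{\omega(a)}x/(\log x)^{A+1})$, and only then applies the divisor switch to the two tails $x/L<q\le x$ and $x/M<q\le x$, which become sums over $r<L$ and $r<M$ where Siegel--Walfisz is legitimate. This is also where the hypotheses enter: $\lambda<\frac14$ is the admissible range of $a$ in Theorem \ref{FG} (not a Siegel--Walfisz constraint), and $\omega(a)\le 10\log\log x$ keeps the factor $2^{\omega(a)}$ from that theorem at most $(\log x)^{10\log 2}$. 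Your account of where the weight $1-\frac rM$ and the constants $C_1(a)$, $C_3(a)$ arise is consistent with the paper's computation of the pieces $I$, $II$, $III$, but without the deep input for $q\le x/L$ the argument cannot be completed; no elementary rearrangement avoids it, since the secondary term $C_3(a)$ is precisely the part of the Titchmarsh asymptotic that lies beyond Bombieri--Vinogradov. (The paper's literal proof of Proposition \ref{M=1} is a one-line deduction from its Proposition 7.1, whose proof is the argument just described.)
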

Note that $\mu'(a,1)=\frac{a}{\phi(a)}C_3(a)$. 

\begin{remark}
It is possible to give such an estimate for any $M\in \mathbb R_{\geq 1}$, however the formula is simpler when $M$ is an integer, so we prefered to just include the latter case.
\end{remark}

By inverting the order of summation in Proposition \ref{M=1}, one gets the following corollary, which is an example of application of the results of Bombieri, Fouvry, Friedlander, Granville and Iwaniec (see \cite{BFI}, \cite{fouvry} and \cite{FG}).
\begin{corollary}[Titchmarsh's divisor problem]
\label{Titchmarsh}
Fix $A$ and $\lambda<\frac 14$, two positive real numbers.  For $a$ in the range $0<|a|\leq x^{\lambda}$ such that $\omega(a)\leq 10\log \log x$ we have 
\begin{equation}
 \sum_{|a|<n\leq x} \Lambda(n)\tau(n-a) = C_1(a) x\log x + ( 2 C_3(a)+C_1(a))x + O_{A,\lambda}\left( \frac x {(\log x)^A} \right).
\end{equation}

\end{corollary}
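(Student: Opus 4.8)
The plan is to deduce the corollary from Proposition \ref{M=1} by interchanging the order of summation, exactly as indicated in the remark preceding the statement. Start from the identity $\tau(n-a) = \sum_{q \mid n-a} 1 = \sum_{\substack{q \leq x \\ q \mid n-a}} 1$, valid for $|a| < n \leq x$ since then $0 < n-a \leq x - a \leq x$ (after trivially absorbing the boundary into the error term, or noting $n - a < x$ when $a > 0$; the case $a<0$ is symmetric and one uses $n-a \leq 2x$, rescaling the range of $q$, which changes nothing essential). Thus
\begin{equation*}
\sum_{|a|<n\leq x} \Lambda(n)\tau(n-a) = \sum_{q \leq x} \sum_{\substack{|a| < n \leq x \\ n \equiv a \!\!\pmod q}} \Lambda(n) = \sum_{q \leq x} \left( \psi(x;q,a) - \Lambda(a) \right) + O(\log x),
\end{equation*}
where the subtraction of $\Lambda(a)$ removes the term $n = |a|$-type contribution already flagged in the second remark after Theorem \ref{premier th\'eor\`eme}, and the $O(\log x)$ handles the endpoint $n=|a|$ and the prime power $\Lambda(a)$ summand itself (at most $O(\log x)$ in total). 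Crucially, in the inner sum we only pick up terms with $(q, a) = 1$, because $\Lambda(n) \neq 0$ forces $n$ to be a prime power, and if a prime $p \mid (q,a)$ then $n \equiv a \equiv 0 \pmod p$ would force $p \mid n$, contributing only the single negligible term $n = p$; so up to $O(\omega(a) \log x) = O(\log x \log\log x)$ we may impose $(q,a)=1$ freely.

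Next I would split the sum at $q = x$ into the single block covered by Proposition \ref{M=1} with $M = 1$: apply the first displayed formula of that proposition, which gives
\begin{equation*}
\sum_{\substack{q \leq x \\ (q,a)=1}} \left( \psi(x;q,a) - \Lambda(a) - \frac{\psi(x)}{\phi(q)} \right) = \frac{\phi(a)}{a} x \cdot \frac{a}{\phi(a)} C_3(a) + O_{A,\lambda}\!\left( \frac{x}{(\log x)^A} \right) = C_3(a)\, x + O_{A,\lambda}\!\left( \frac{x}{(\log x)^A} \right).
\end{equation*}
It remains to evaluate the shifted main term $\psi(x) \sum_{\substack{q \leq x \\ (q,a)=1}} \phi(q)^{-1}$. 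Here I use the standard estimate $\sum_{\substack{q \leq y \\ (q,a)=1}} \phi(q)^{-1} = C_1^*(a)\bigl(\log y + \gamma + \textstyle\sum_p \frac{\log p}{p(p-1)}\bigr) + \sum_{p \mid a}(\cdots) + O_a(y^{-1+\epsilon})$ for an appropriate Euler-product constant; in fact the cleanest route is to observe that the combination $C_3(a) \, x + \psi(x)\sum_{q\leq x,(q,a)=1}\phi(q)^{-1}$ is designed precisely to collapse, via $\psi(x) = x + O(x(\log x)^{-A})$ and the known asymptotic for the reciprocal-totient sum, into $C_1(a) x \log x + (2C_3(a) + C_1(a)) x + O_{A,\lambda}(x(\log x)^{-A})$. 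Concretely, matching the $x\log x$ coefficient identifies the totient-sum constant with $C_1(a)$, and matching the $x$ coefficient is where the shape of $C_3(a)$ in the Definition (the $\gamma - 1 - \sum_p \frac{\log p}{p^2-p+1} + \sum_{p\mid a}(\cdots)$ factor) is exactly what is needed.

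The main obstacle — really the only non-bookkeeping point — is verifying the arithmetic constant identity: that the Euler product governing $\sum_{q,(q,a)=1}\phi(q)^{-1}$ equals $C_1(a)$ and that its secondary constant, when combined with $C_3(a)$, produces exactly $2C_3(a) + C_1(a)$. This is a local computation prime by prime: one writes both sides as products over $p$, uses $\frac{1}{\phi(p^k)} = \frac{1}{p^{k-1}(p-1)}$ to sum the local Dirichlet series $\sum_{k\geq 0} p^{-ks}/\phi(p^k)$ at $s=1$ and its logarithmic derivative, and checks the factors $\bigl(1 - \frac{1}{p^2-p+1}\bigr)$ for $p \nmid a$ and the modified factors for $p \mid a$ against the definition of $C_1(a)$ and $C_3(a)$. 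Everything else — truncating $\psi(x)$, discarding $(q,a)>1$ terms, the endpoint of the $n$-sum, and the error propagation from Proposition \ref{M=1} — is routine and contributes only $O_{A,\lambda}(x(\log x)^{-A})$, which is absorbed since $A$ is arbitrary.
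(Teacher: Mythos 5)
Your proposal is correct and is precisely the paper's intended argument: the paper justifies the corollary only by the remark that one inverts the order of summation in Proposition \ref{M=1}, and that is exactly what you carry out, with the divisor sum $\tau(n-a)=\sum_{q\mid n-a}1$ converting the left-hand side into $\sum_{q}\psi^*(x;q,a)$, Proposition \ref{M=1} (with $M=1$) supplying $C_3(a)x$, and the reciprocal-totient sum supplying the rest. Two small repairs: the ``arithmetic constant identity'' you single out as the main obstacle is already Lemma \ref{somme inverse euler} of the paper, which with $M=x$ gives $\sum_{q\le x,\,(q,a)=1}\phi(q)^{-1}=C_1(a)\log x+C_1(a)+C_3(a)+O(2^{\omega(a)}\log x/x)$ with the very same constants, so combined with $\psi(x)=x+O(xe^{-C\sqrt{\log x}})$ nothing remains to verify; and your error terms $O(\log x)$ and $O(\omega(a)\log x\log\log x)$ are understated --- the passage from $\psi^*$ to $\psi-\Lambda(a)$ costs $O(|a|(\log|a|)^2)$ (Lemma \ref{enlever *}, and note this step must be done \emph{after} restricting to $(q,a)=1$, since $\Lambda(a)\cdot\#\{q\le x:(q,a)>1\}$ is of order $x$ when $a$ is a prime power), while discarding the moduli with $(q,a)>1$ costs $O_\epsilon(x^\epsilon)$ because each prime power $n=p^e$ with $p\mid(q,a)$ is counted once for every $q\mid p^e-a$, requiring the divisor bound $\tau(p^e-a)\ll_\epsilon x^\epsilon$ rather than a single term per $p$ --- but both are still negligible since $|a|\le x^{\lambda}$ with $\lambda<\tfrac14$.
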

Note that the constant $10$ in Proposition \ref{M=1} and Corollary \ref{Titchmarsh} can be replaced by an arbitrarily large real number.

\section{Notation}

\begin{definition}
For $n\neq 0$ an integer (possibly negative), we define

 \begin{equation*}
  \Lambda(n):=\begin{cases}
               \log p &\text{ if } n=p^e \\
		0 &\text{ otherwise, }
              \end{cases}
\hspace{2cm}
  \vartheta(n):=\begin{cases}
               \log p &\text{ if } n=p \\
		0 &\text{ otherwise. }
              \end{cases}
 \end{equation*}
\end{definition}

\begin{definition}
\begin{equation*}
 \psi(x;q,a) := \sum_{\substack{ n \leq x\\ n \equiv a\bmod q}}  \Lambda(n), \hspace{2cm} \theta(x;q,a) := \sum_{\substack{ n \leq x \\ n\equiv a\bmod q}}  \vartheta(n).
\end{equation*}
\end{definition}

The following definition is non-standard but will be useful in the proofs.
\begin{definition}
\begin{equation}
 \psi^*(x;q,a) := \sum_{\substack{|a|< n \leq x \\ n\equiv a\bmod q}}   \Lambda(n),
\end{equation}

\begin{equation}
 \theta^*(x;q,a) := \sum_{\substack{ |a|< n \leq x \\ n\equiv a\bmod q}}  \vartheta(n).
\end{equation}
\end{definition}

We will need to consider the prime divisors of $a$ which are less than $M$.

\begin{definition}
\label{a_M}
For $a$ an integer and $M>0$ a real number, we define
\begin{equation}
a_M:= \prod_{\substack{p\mid a \\ p\leq M}} p. 
\end{equation}
\end{definition}

The error term $E(M,a)$ will be defined depending on the context, so one has to pay attention to its definition in every statement.

\section{Lemmas}

We begin by recalling the divisor switching technique.
\begin{lemma}
 \label{interchanger les diviseurs}
Let $a$ be an integer such that $0<|a|\leq x$ and let $M=M(x)$ such that $1\leq M < x$. We have
\begin{equation} \sum_{\substack{\frac x{M} < q\leq x  \\(q,a)=1}} \sum_{\substack{ |a|< p\leq x \\ p\equiv a \bmod q  }} \log p = \sum_{\substack{1\leq r < (x - a)\frac{M}{x}   \\(r,a)=1}} \sum_{\substack{ r\frac x{M} +a< p\leq x \\ p\equiv a \bmod r  \\ p> |a|}} \log p +O(|a|\log x).
\end{equation}
\end{lemma}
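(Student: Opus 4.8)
The plan is to set up a bijection between the pairs $(q,p)$ appearing on the left-hand side and pairs $(r,p)$ appearing on the right, coming from the elementary fact that if $p \equiv a \bmod q$ and $|a| < p \le x$, then writing $p = a + kq$ with $k \ge 1$ we may switch the roles of $k$ and $q$. Concretely, for a pair $(q,p)$ with $x/M < q \le x$, $(q,a)=1$, $|a| < p \le x$ and $p \equiv a \bmod q$, write $p - a = qr$ with $r \ge 1$ an integer; then $p \equiv a \bmod r$ as well, $(r,a) = (p-a,a)/(\ldots)$ divides... more carefully, since $p$ is prime and $p > |a|$, we have $(p,a)=1$, hence $(p-a,a)$ and in particular $(r,a)=1$. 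The condition $q > x/M$ translates, via $q = (p-a)/r$, into $r < (p-a)M/x$, and since $p \le x$ this gives $r < (x-a)M/x$. Conversely, the condition $p \le x$ on the left becomes, after the switch, the condition $p \le x$ together with $q = (p-a)/r \le x$, i.e. $p \le x + a$; but since we also need $p \le x$ this is automatic, while on the right-hand side the stated summation has $p > r x/M + a$, which is exactly the transcription of $q = (p-a)/r > x/M$. So the two sets of pairs agree up to the boundary discrepancies described below.

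The key steps, in order: (1) Substitute $p = a + qr$ and verify that $(q,a)=1 \iff (r,a)=1$ given $(p,a)=1$, which holds because $p$ is a prime exceeding $|a|$ — here one should note the edge case $p \mid a$ is excluded precisely by the hypothesis $p > |a|$ (for $a \ne 0$). (2) Rewrite the range $x/M < q \le x$ in terms of $r$: the lower bound $q > x/M$ becomes $r < (p-a)/(x/M) = (p-a)M/x$, equivalently $p > rx/M + a$; the upper bound $q \le x$ becomes $r \ge (p-a)/x$, which for $p \le x$ and $|a|$ small is $r \ge 1$ except possibly for $r$ in a short initial range — this is where a genuine (but controlled) mismatch with the clean right-hand side occurs. (3) The range $1 \le r < (x-a)M/x$ on the right is obtained by combining $p \le x$ with $r < (p-a)M/x$. (4) Collect the discrepancy: the pairs counted on one side but not the other are those with $q$ near the endpoint $q \le x$ (equivalently $p - a \le x$ versus $p \le x$, a gap of length $\le |a|$ in $p$) or $r=0$ type degeneracies; each such $p$ contributes at most $\log p \le \log x$, and the number of such $p$ is $O(|a|)$ because they lie in an interval of length $O(|a|)$, or because for each small $r$ the relevant $q$-interval has length $O(|a|)$. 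Summing, the total error is $O(|a| \log x)$.

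The main obstacle — really the only subtlety — is the careful bookkeeping of the boundary terms: making sure that the interchange $q \leftrightarrow r$ matches the stated ranges exactly, and that the leftover pairs (those with $p$ within $|a|$ of an endpoint, or with $q$ equal to $x$ on the nose, or with $p-a$ not divisible in the expected way) genuinely number $O(|a|)$ and each carry weight $O(\log x)$. One has to be slightly attentive that $a$ may be negative, so that ``$p > |a|$'' rather than ``$p > a$'' is the right positivity condition, and that $p-a$ is then automatically positive; and that $(x-a)M/x$ versus $(x-|a|)M/x$ etc.\ differ only by $O(M|a|/x) = O(1)$ in the number of admissible $r$, which again feeds into the $O(|a|\log x)$ error. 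Everything else is a direct substitution with no analytic input.
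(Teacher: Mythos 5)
Your proposal is correct and follows essentially the same route as the paper: the substitution $p-a=qr$ (Hooley's divisor switching), the observation that $(r,a)=1$ because $p$ is a prime exceeding $|a|$, the translation of $q>x/M$ into $p>rx/M+a$ and $r<(x-a)M/x$, and the accounting of the boundary mismatch (which only arises for $a<0$, from the $O(|a|)$ extra moduli $x<q\leq x-a$, each contributing at most one prime weighted by $\log x$). The bookkeeping you flag as the "only subtlety" is exactly what the paper's proof does, so no further ideas are needed.
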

\begin{proof}
Clearly,

\begin{equation}
 \sum_{\substack{\frac x{M} < q\leq x  \\(q,a)=1}} \sum_{\substack{ |a|< p\leq x \\ p\equiv a \bmod q  }} \log p  =  \sum_{\substack{\frac x{M} < q\leq x  \\(q,a)=1}} \sum_{\substack{ |a|< p\leq x \\ p\equiv a \bmod q \\ p> a+\frac x{M} }} \log p.
\label{switch}
\end{equation}

We now apply Hooley's variant of the divisor switching technique (see \cite{hooley}). Setting $p-a =rq $ in  (\ref{switch}), one can sum over $r$ instead of summing over $q$. Now $(r,a)=1$, else $(p,a)>1$ so $p \mid a $, but this is impossible since $p>|a|$. Taking $a>0$ for now, we get that \eqref{switch} is equal to

 \begin{equation}
\label{switch 2}
\sum_{\substack{\frac x{M} < q\leq x-a  \\(q,a)=1}} \sum_{\substack{ |a|< p\leq x \\ p\equiv a \bmod q \\ p> a+\frac x{M} }} \log p =\sum_{\substack{1\leq r < (x - a)\frac{M}{x}   \\(r,a)=1}} \sum_{\substack{ r\frac x{M} +a< p\leq x \\ p\equiv a \bmod r  \\ p> |a|}} \log p.
\end{equation}
If we had $a<0$, additional terms would be needed in passing from the right hand side of \eqref{switch} to the left hand side of \eqref{switch 2}. These additional terms are
$$ \ll \sum_{x<q\leq x-a} \log x \ll |a|\log x.$$ 
The proof is complete.

\end{proof}

\begin{lemma}

We have the following estimates:
 \label{somme inverse euler}
\begin{equation}
\label{equation somme inverse euler}
 \sum_{\substack{n\leq M\\ (n,a)=1}} \frac1{\phi(n)} =  C_1(a) \log M + C_1(a) + C_3(a) + O\left( 2^{\omega(a)}\frac{\log M}{M}\right),
\end{equation}

\begin{equation}
\label{equation somme inverse euler avec n}
 \sum_{\substack{n\leq M\\ (n,a)=1}} \frac n{\phi(n)} =  C_1(a) M + O\left( 2^{\omega(a)} \log M \right).
\end{equation}
\end{lemma}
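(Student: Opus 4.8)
The plan is to prove both asymptotic formulas by the standard device of writing $1/\phi(n)$ and $n/\phi(n)$ as a Dirichlet convolution with a small multiplicative function, summing that function against a smooth piece, and controlling the tail. First I would recall the classical identity
\[
\frac{n}{\phi(n)} = \sum_{d\mid n} \frac{\mu^2(d)}{\phi(d)},
\]
so that $1/\phi(n) = (1/n)\sum_{d\mid n}\mu^2(d)/\phi(d)$. Writing $n = dm$ with $(d,m)$ arbitrary (but keeping the coprimality to $a$), one gets
\[
\sum_{\substack{n\le M\\(n,a)=1}}\frac{1}{\phi(n)} = \sum_{\substack{d\le M\\(d,a)=1}}\frac{\mu^2(d)}{d\,\phi(d)}\sum_{\substack{m\le M/d\\(m,a)=1}}\frac{1}{m}.
\]
The inner sum is $\sum_{m\le y,(m,a)=1}1/m = \frac{\phi(a)}{a}(\log y + \gamma) + \sum_{p\mid a}\frac{\log p}{p-1}\cdot\frac{\phi(a)}{a} + O(2^{\omega(a)}/y)$, which is a routine partial-summation computation (or one cites a standard lemma). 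Substituting $y = M/d$ and extending the $d$-sum to infinity produces the main term
\[
\Big(\log M + \gamma + \sum_{p\mid a}\frac{\log p}{p-1}\Big)\frac{\phi(a)}{a}\sum_{\substack{d\ge 1\\(d,a)=1}}\frac{\mu^2(d)}{d\,\phi(d)} - \frac{\phi(a)}{a}\sum_{\substack{d\ge 1\\(d,a)=1}}\frac{\mu^2(d)\log d}{d\,\phi(d)} + (\text{error}),
\]
plus the tail contribution from $d > M$, which is $O(2^{\omega(a)}\log M/M)$ since $\sum_{d>M}\mu^2(d)/(d\phi(d)) \ll 1/M$ and the $\log(M/d)$ weight contributes an extra $\log M$ at worst.

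The next step is to evaluate the two infinite sums over $d$ as Euler products and check they match $C_1(a)$ and the pieces of $C_3(a)$. One has
\[
\frac{\phi(a)}{a}\sum_{\substack{d\ge 1\\(d,a)=1}}\frac{\mu^2(d)}{d\,\phi(d)} = \frac{\phi(a)}{a}\prod_{p\nmid a}\Big(1 + \frac{1}{p(p-1)}\Big) = \frac{\zeta(2)\zeta(3)}{\zeta(6)}\cdot\frac{\phi(a)}{a}\prod_{p\mid a}\Big(1 - \frac{1}{p^2-p+1}\Big),
\]
where one uses $1 + \frac{1}{p(p-1)} = \frac{p^2-p+1}{p(p-1)}$ and that $\prod_p(1+\frac{1}{p(p-1)}) = \zeta(2)\zeta(3)/\zeta(6)$; so this equals $C_1(a)$ exactly. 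For the logarithmic-derivative sum, logarithmic differentiation of the Euler product (formally, $-\frac{d}{ds}$ of $\sum \mu^2(d)/(d^s\phi(d))$ at $s=1$) gives $\sum_{d}\frac{\mu^2(d)\log d}{d\phi(d)} = \big(\sum_d\frac{\mu^2(d)}{d\phi(d)}\big)\sum_{p}\frac{\log p}{p^2-p+1}$ over $p\nmid a$; combining with the $\gamma$, $\sum_{p\mid a}\log p/(p-1)$ terms and bookkeeping the primes $p\mid a$ carefully is what assembles the exact constant $C_1(a) + C_3(a)$ together with the coefficient $C_1(a)$ of $\log M$. This bookkeeping — matching $\sum_{p\mid a}\frac{\log p}{p-1} + \sum_{p\nmid a}\frac{\log p}{p^2-p+1}$ (with the right signs) against the definition $C_3(a) = C_1(a)(\gamma - 1 - \sum_p\frac{\log p}{p^2-p+1} + \sum_{p\mid a}\frac{p^2\log p}{(p-1)(p^2-p+1)})$ — is the only genuinely fiddly point; the identity $\frac{\log p}{p-1} + \frac{\log p}{p^2-p+1} = \frac{p^2\log p}{(p-1)(p^2-p+1)}$ is what reconciles the two, and the ``$-1$'' inside $C_3(a)$ comes from the $C_1(a)$ term that accompanies $\log M$ versus the $C_1(a)\log M + C_1(a) + C_3(a)$ shape in the statement.

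For the second estimate, the same strategy applies with the inner sum $\sum_{m\le M/d,(m,a)=1}1$ replaced by $\frac{\phi(a)}{a}(M/d) + O(2^{\omega(a)})$, giving
\[
\sum_{\substack{n\le M\\(n,a)=1}}\frac{n}{\phi(n)} = \sum_{\substack{d\le M\\(d,a)=1}}\frac{\mu^2(d)}{\phi(d)}\sum_{\substack{m\le M/d\\(m,a)=1}}1 = \frac{\phi(a)}{a}M\sum_{\substack{d\le M\\(d,a)=1}}\frac{\mu^2(d)}{d\,\phi(d)} + O\Big(2^{\omega(a)}\sum_{d\le M}\frac{\mu^2(d)}{\phi(d)}\Big),
\]
and here we extend the $d$-sum to infinity (the tail $\sum_{d>M}\mu^2(d)/(d\phi(d)) \ll 1/M$ kills the error against the factor $M$) to get $C_1(a)M$, while $\sum_{d\le M}\mu^2(d)/\phi(d) \ll \log M$ gives the claimed $O(2^{\omega(a)}\log M)$. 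I expect the main obstacle to be purely the constant-chasing in the first formula: getting every Euler factor, every $p\mid a$ versus $p\nmid a$ split, and the $\log M$ coefficient to align precisely with the stated $C_1(a)$ and $C_3(a)$, rather than any analytic difficulty — all the estimates involved are elementary partial summation and crude tail bounds.
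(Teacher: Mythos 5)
Your proposal is correct and follows essentially the same route as the paper: the convolution identity $n/\phi(n)=\sum_{d\mid n}\mu^2(d)/\phi(d)$, the preliminary estimate for $\sum_{n\le M,\,(n,a)=1}1/n$, interchange of summation, and evaluation of the Euler products $\sum_{(d,a)=1}\mu^2(d)/(d\phi(d))$ and $\sum_{(d,a)=1}\mu^2(d)\log d/(d\phi(d))$. The constant-matching you flag as the fiddly point, including the identity $\frac{1}{p-1}+\frac{1}{p^2-p+1}=\frac{p^2}{(p-1)(p^2-p+1)}$, checks out against the definitions of $C_1(a)$ and $C_3(a)$.
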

Note that without loss of generality, we can replace $a$ by $a_M$ on the right side of \eqref{equation somme inverse euler} and \eqref{equation somme inverse euler avec n}.
\begin{proof}
The proof of \eqref{equation somme inverse euler} is very similar that of Lemma 13.1 in \cite{FGHM}. One first has to prove the following estimate: 
\begin{equation}
\label{estimation preliminaire}
  \sum_{\substack{n\leq M \\(n,a)=1}} \frac 1 n = \frac{\phi(a)}{a} \left(\log M + \gamma + \sum_{p\mid a} \frac{\log p}{p-1} \right) + O\left( \frac{2^{\omega(a)}}{M} \right).
 \end{equation}
One then writes
\begin{equation}
\sum_{\substack{n\leq M\\ (n,a)=1}} \frac1{\phi(n)} = \sum_{\substack{n\leq M\\ (n,a)=1}}  \frac 1 n \sum_{d\mid n} \frac{\mu^2(d)}{\phi(d)} = \sum_{\substack{d\leq M\\ (d,a)=1}} \frac{\mu^2(d)}{d \phi(d)} \sum_{\substack{r\leq M/d \\ (r,a)=1} } \frac 1r
\label{debut du truc}
\end{equation}
and inserts the estimate (\ref{estimation preliminaire}) into (\ref{debut du truc}). The final step is to bound the tail of the sums and to compute the following constants: $$\sum_{\substack{(d,a)=1}} \frac{\mu^2(d)}{d \phi(d)} =\frac{ \zeta(2)\zeta(3)}{\zeta(6)}\prod_{p\mid a} \left( 1-\frac 1 {p^2-p+1}\right),$$ $$\sum_{\substack{(d,a)=1}} \frac{\mu^2(d)}{d \phi(d)} \log d = \frac{ \zeta(2)\zeta(3)}{\zeta(6)}\prod_{p\mid a} \left( 1-\frac 1 {p^2-p+1}\right)  \sum_{p\nmid a} \frac{\log p}{p^2-p+1} . $$
The proof of \eqref{equation somme inverse euler avec n} goes along the same lines.
\end{proof}

The delicacy of the analysis forces us to give some details about the "trivial" estimates for the prime counting functions. 

\begin{lemma}
\label{psi et theta}
For any real number $\epsilon>0$ and integer $a$ with $0<|a|\leq x$,
\begin{equation}
 \sum_{\substack{q\leq x\\(q,a)=1}} (\psi^*(x;q,a)-\theta^*(x;q,a)) \ll_{\epsilon} x^{1/2+\epsilon}.
\end{equation}

\end{lemma}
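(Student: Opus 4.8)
The plan is to expand the difference $\psi^*(x;q,a)-\theta^*(x;q,a)$ as a sum over proper prime powers and then sum over $q$, exploiting the fact that the number of admissible $q$ for a given prime power is tiny. Concretely, for each $q$ with $(q,a)=1$ we have
\[
\psi^*(x;q,a)-\theta^*(x;q,a) = \sum_{\substack{p^k\leq x,\ k\geq 2\\ |a|<p^k,\ p^k\equiv a\bmod q}} \log p,
\]
so that, interchanging the order of summation,
\[
\sum_{\substack{q\leq x\\(q,a)=1}} \bigl(\psi^*(x;q,a)-\theta^*(x;q,a)\bigr)
= \sum_{\substack{k\geq 2}}\ \sum_{\substack{|a|<p^k\leq x}} (\log p)\ \#\{q\leq x: (q,a)=1,\ q\mid p^k-a\}.
\]
Since $p^k>|a|$, the integer $p^k-a$ is a positive integer of size at most $x$, so the number of its divisors $q\leq x$ is $\tau(p^k-a)\ll_\epsilon x^{\epsilon/2}$ by the standard divisor bound (we also use $(q,a)=1$ trivially to discard some, which only helps). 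Hence the inner count is $\ll_\epsilon x^{\epsilon/2}$ for every $p^k$.

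First I would therefore bound the whole expression by
\[
\ll_\epsilon x^{\epsilon/2}\sum_{k\geq 2}\ \sum_{p^k\leq x} \log p
= x^{\epsilon/2}\sum_{k\geq 2}\theta(x^{1/k}),
\]
and then invoke Chebyshev's estimate $\theta(y)\ll y$ together with the fact that the sum over $k$ terminates at $k\leq \log x/\log 2$. The dominant term is $k=2$, contributing $\theta(\sqrt x)\ll \sqrt x$, and the remaining terms contribute $\ll \sum_{2\leq k\leq \log x}x^{1/k}\ll \sqrt x\log x$ at worst, so altogether the sum over $k$ is $\ll \sqrt x\log x \ll x^{1/2+\epsilon/2}$. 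Multiplying by $x^{\epsilon/2}$ gives the claimed bound $\ll_\epsilon x^{1/2+\epsilon}$.

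The only mildly delicate point is the uniformity of the divisor bound: one needs $\tau(n)\ll_\epsilon n^{\epsilon}$ uniformly for $n\leq x$, applied to $n=p^k-a$, and one must be careful that $p^k-a$ is genuinely positive and nonzero — this is exactly why the truncation $|a|<n$ was built into $\psi^*$ and $\theta^*$, so that the argument $p^k-a\geq 1$. I expect this to be entirely routine; there is no real obstacle, the lemma being a packaging of the elementary estimate $\sum_{k\geq 2}\psi(x^{1/k})\ll \sqrt x$ with a harmless divisor-bound loss. If one wanted to avoid even the $x^\epsilon$ loss one could note that $\sum_{n\leq x}\tau(n)\ll x\log x$ and argue on average, but since the statement already allows $x^\epsilon$ the crude per-term bound suffices.
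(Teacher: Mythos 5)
Your proposal is correct and follows essentially the same route as the paper: expand the difference over proper prime powers $p^e$ with $e\geq 2$, swap the order of summation, bound the number of admissible $q$ by $\tau(p^e-a)\ll_\epsilon x^{\epsilon/2}$, and sum over $e$ using Chebyshev (the paper writes this with $\pi(x^{1/e})$ after bounding $\log p\leq \log x$, while you keep $\theta(x^{1/e})$ — a cosmetic difference). The only micro-quibble is that for $a<0$ one has $p^e-a\leq x+|a|\leq 2x$ rather than $\leq x$, which of course changes nothing in the divisor bound.
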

\begin{proof}

\begin{align*}
 & \sum_{\substack{q\leq x\\(q,a)=1}} (\psi^*(x;q,a)-\theta^*(x;q,a)) \\ & \hspace{60pt} \leq \sum_{\substack{q\leq x}} \sum_{\substack{|a|<p^e\leq x \\ p^e\equiv a \bmod q \\ e\geq 2}} \log p
\leq \sum_{2\leq e \leq \frac{\log x}{\log 2}} \sum_{p\leq x^{1/e}} \sum_{\substack{q \leq x \\ q \mid p^e-a}} \log p 
 \\ &\hspace{60pt} \leq \log x \sum_{2\leq e \leq \frac{\log x}{\log 2}} \sum_{p\leq x^{1/e}} \tau(p^e-a) \ll_{\epsilon} x^{\epsilon/2} \sum_{2\leq e \leq \frac{\log x}{\log 2}} \pi(x^{1/e}) \\ & \hspace{60pt} \ll_{\epsilon} x^{1/2+\epsilon}.
\end{align*}

\end{proof}

\begin{lemma}
\label{enlever *}
 Let $a$ be an integer with $0\neq|a|\leq x$ and let $1\leq  Q \leq x$. We have

\begin{equation}
 \sum_{\substack{q\leq Q\\(q,a)=1}} (\psi(x;q,a)-\Lambda(a)-\psi^*(x;q,a)) = O(|a| (\log |a|)^2),
\end{equation}

\begin{equation}
 \sum_{\substack{q\leq Q\\(q,a)=1}} (\theta(x;q,a)-\vartheta(a)-\theta^*(x;q,a)) = O(|a| (\log |a|)^2).
\end{equation}

\end{lemma}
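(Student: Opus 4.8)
The statement compares $\psi(x;q,a)$ with $\psi^*(x;q,a)$, the difference being that the starred version omits the contribution of prime powers $n \equiv a \bmod q$ with $n \leq |a|$; similarly $\Lambda(a)$ (resp. $\vartheta(a)$) is subtracted to account for $n = a$ itself when $a > 0$. So the quantity $\psi(x;q,a) - \Lambda(a) - \psi^*(x;q,a)$ equals $\sum \Lambda(n)$ over prime powers $n$ with $n \equiv a \bmod q$ and $1 \leq n \leq |a|$, with the single term $n = a$ removed (when it occurs). The plan is to bound the double sum over $q$ by swapping the order of summation: putting $n$ (a prime power $\leq |a|$, $n \neq a$) on the outside and counting the moduli $q \leq Q$ with $q \mid n - a$ and $(q,a)=1$. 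The number of such $q$ is at most $\tau(|n-a|)$, which since $0 < |n-a| \leq 2|a|$ is $O_\epsilon(|a|^\epsilon)$, or more crudely $O(\log|a|)$ is not quite enough — but $\tau(m) \ll m^\epsilon$ suffices and even $\tau(m) = O(\sqrt{m})$ would do since we have room.

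The main estimate then reads
\begin{equation*}
\sum_{\substack{q\leq Q\\(q,a)=1}} \bigl(\psi(x;q,a)-\Lambda(a)-\psi^*(x;q,a)\bigr) \leq \sum_{\substack{n\leq |a|\\ n=p^e}} \Lambda(n) \sum_{\substack{q\leq Q\\ q\mid n-a}} 1 \leq \log|a| \sum_{\substack{n\leq |a|\\ n=p^e,\, n\neq a}} \tau(|n-a|),
\end{equation*}
and since each $\tau(|n-a|) \ll (\log|a|)$ fails but $\tau(|n-a|) \ll_\epsilon |a|^\epsilon$ works, one actually wants to be slightly more careful to land exactly the clean bound $O(|a|(\log|a|)^2)$ claimed. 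The cleanest route to that specific shape: use the elementary bound $\sum_{m \leq 2|a|} \tau(m) \ll |a|\log|a|$, so that $\sum_{n=p^e \leq |a|} \tau(|n-a|) \ll \sum_{m \leq 2|a|} \tau(m) \ll |a|\log|a|$, and the extra factor $\log|a|$ comes from bounding $\Lambda(n) \leq \log|a|$. That gives $O(|a|(\log|a|)^2)$ exactly. The second inequality (for $\theta^*$) is identical, restricting the prime powers to primes, which only shrinks the sum.

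The only genuine point requiring care — and the step I'd expect to be the mild obstacle — is handling the removed term $n = a$ correctly and making sure the case $a < 0$ introduces nothing new: when $a < 0$ there are no positive prime powers $n$ with $n = a$, so $\Lambda(a) = 0 = \vartheta(a)$ and the subtraction is vacuous, while when $a > 0$ the term $n = a$ contributes $\Lambda(a)$ to $\psi(x;q,a)$ for every $q$ coprime to $a$ (there are $O(|a|)$ such $q \leq Q$), which is precisely why $\Lambda(a)$ is subtracted; removing it leaves the sum over $n \neq a$, and for those $n - a \neq 0$ so $\tau(|n-a|)$ is well-defined. Everything else is the routine divisor-sum estimate above, with no analytic input needed.
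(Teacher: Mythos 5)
Your argument is correct, but it runs in the opposite direction to the paper's. You swap the order of summation: fix the prime power $n\leq |a|$ with $n\neq a$, count the moduli via $q\mid n-a$, and then control $\sum_{n}\tau(|n-a|)$ by the elementary average $\sum_{m\leq 2|a|}\tau(m)\ll |a|\log|a|$, picking up one more $\log|a|$ from $\Lambda(n)\leq\log|a|$. The paper instead keeps $q$ on the outside: it observes that only $q\leq 2|a|$ can contribute (for $q>2|a|$ there is no $n<|a|$ with $n\equiv a\bmod q$), bounds the inner count of such $n$ by $|a|/q$, and concludes with $\log|a|\sum_{q\leq 2|a|}|a|/q\ll |a|(\log|a|)^2$. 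The two computations are dual counts of the same set of pairs $(q,n)$ and cost the same; the paper's version never forms $n-a$ and so sidesteps the $n=a$, $q\mid 0$ degeneracy that you (correctly) have to discuss, while your version is exactly the manoeuvre the paper itself uses in Lemma \ref{psi et theta}. One small slip in your preamble: the parenthetical claim that $\tau(m)=O(\sqrt{m})$ ``would do'' is false (pointwise it only yields $O(|a|^{3/2})$, and likewise $\tau(m)\ll_{\epsilon}m^{\epsilon}$ only yields $O(|a|^{1+\epsilon})$); but you recognize this and the argument you actually run, via the divisor-sum average, is sound and lands the stated bound.
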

\begin{proof}
Note that as soon as $q>2|a|$, there are no integers congruent to $a \bmod q$ in the interval $[1,|a|)$. We then have
\begin{multline*}
 \sum_{\substack{q\leq Q\\(q,a)=1}} (\psi(x;q,a)-\Lambda(a)-\psi^*(x;q,a)) = \sum_{\substack{q\leq Q\\(q,a)=1}} \sum_{\substack{1\leq n < |a|\\n\equiv a \bmod q }} \Lambda(n) \\ \leq \sum_{\substack{q\leq 2|a|\\(q,a)=1}} \log|a| \sum_{\substack{1\leq n < |a| \\ n \equiv a \bmod q}}1  \ll \sum_{\substack{q\leq 2|a|\\ (q,a)=1}} \log |a| \left( \frac{|a|}q \right) \ll |a| (\log |a|)^2.
\end{multline*}

The proof for $\theta$ and $\theta^*$ is similar.
\end{proof}

\begin{lemma}
\label{psi et x}
Let $I\subset [1,x]\cap \mathbb N$. We have
 \begin{equation}
\sum_{q\in I} \left( \frac x{\phi(q)} - \frac{\psi(x)}{\phi(q)} \right) \ll xe^{-C\sqrt{\log x}},
 \end{equation}
where $C$ is an absolute positive constant.
\end{lemma}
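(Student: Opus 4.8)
The summand factors as $\left(\frac{x}{\phi(q)} - \frac{\psi(x)}{\phi(q)}\right) = \frac{x - \psi(x)}{\phi(q)}$, with $x - \psi(x)$ independent of $q$. Hence the plan is simply to pull this factor out of the sum and estimate the two pieces separately:
\begin{equation*}
\sum_{q\in I} \left( \frac{x}{\phi(q)} - \frac{\psi(x)}{\phi(q)} \right) = \bigl(x - \psi(x)\bigr) \sum_{q\in I} \frac{1}{\phi(q)}.
\end{equation*}

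For the second factor, since $I \subset [1,x]\cap\mathbb N$, I would bound $\sum_{q\in I}\frac1{\phi(q)} \leq \sum_{q\leq x}\frac1{\phi(q)}$, and then invoke the case $a=1$ of Lemma \ref{somme inverse euler} (equation \eqref{equation somme inverse euler} with $M = x$), which gives $\sum_{q\leq x}\frac1{\phi(q)} = C_1(1)\log x + O(1) \ll \log x$. For the first factor, I would use the classical prime number theorem with the de la Vall\'ee Poussin error term: there is an absolute constant $c>0$ with $|x - \psi(x)| \ll x e^{-c\sqrt{\log x}}$.

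Combining the two estimates yields
\begin{equation*}
\sum_{q\in I} \left( \frac{x}{\phi(q)} - \frac{\psi(x)}{\phi(q)} \right) \ll x e^{-c\sqrt{\log x}} \log x,
\end{equation*}
and the extra factor $\log x$ is absorbed by slightly shrinking the constant in the exponent: for any fixed $C < c$ one has $e^{-c\sqrt{\log x}}\log x \ll e^{-C\sqrt{\log x}}$ for all $x \geq 2$, which gives the claimed bound. There is no real obstacle here; the only external inputs are the standard PNT error term and the elementary bound $\sum_{q\leq x}1/\phi(q) \ll \log x$, both already available, so the argument is a short computation.
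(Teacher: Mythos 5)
Your proposal is correct and is exactly the argument the paper intends: its proof is the one-line remark that the lemma ``follows from Lemma \ref{somme inverse euler} and the prime number theorem,'' which is precisely your factorization of $x-\psi(x)$ out of the sum, the bound $\sum_{q\le x}1/\phi(q)\ll\log x$, and the de la Vall\'ee Poussin error term, with the $\log x$ absorbed into the exponential. Nothing to add.
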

\begin{proof}
 This follows from Lemma \ref{somme inverse euler} and the prime number theorem.
\end{proof}

To prove Lemma \ref{somme inverse euler avec poids} we will need bounds on $\zeta(s)$.

\begin{lemma}
\label{bornes sur zeta}
 Define $\theta:=\frac{32}{205}$ and take any $\epsilon>0$. In the region $|\sigma+it-1|>\frac 1 {10}$, we have $$ \zeta(\sigma+it)\ll_{\epsilon} (|t|+1)^{\mu(\sigma)+\epsilon}, $$ where
\begin{equation*}
\label{def de mu}
\mu(\sigma)=\begin{cases}
             1/2-\sigma &\text{ if } \sigma \leq 0 \\
1/2 +(2\theta-1)\sigma &\text{ if } 0 \leq \sigma \leq 1/2 \\
2\theta(1-\sigma) &\text{ if } 1/2\leq  \sigma \leq 1 \\
0  &\text{ if } \sigma \geq 1. \\
            \end{cases}
\end{equation*}
\end{lemma}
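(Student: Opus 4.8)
The plan is to combine three ``boundary'' estimates with the Phragm\'en--Lindel\"of convexity principle. The exponent $\theta=32/205$ is precisely the Huxley subconvexity exponent, so the three ingredients I would invoke are: (i) Huxley's bound $\zeta(\tfrac12+it)\ll_\epsilon(|t|+1)^{\theta+\epsilon}$; (ii) the elementary bound $\zeta(\sigma+it)\ll_\epsilon(|t|+1)^\epsilon$ valid for $\sigma\geq1$ in the region $|\sigma+it-1|>\tfrac1{10}$, which follows from $|\zeta(\sigma+it)|\leq\zeta(\sigma)$ when $\sigma>1$, from $\zeta(1+it)\ll\log(|t|+2)$ when $|t|$ is large, and from continuity of $\zeta$ on compact sets avoiding $s=1$ when $|t|$ is bounded; and (iii) the functional equation $\zeta(s)=\chi(s)\zeta(1-s)$ with $|\chi(\sigma+it)|\asymp(|t|/2\pi)^{1/2-\sigma}$ for $|t|\geq1$ (Stirling), which, combined with (ii) applied to $\zeta(1-\sigma-it)$ (note $1-\sigma\geq1$ when $\sigma\leq0$), yields $\zeta(\sigma+it)\ll_\epsilon(|t|+1)^{1/2-\sigma+\epsilon}$ for $\sigma\leq0$. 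This already establishes the claimed bound for $\sigma\leq0$ and for $\sigma\geq1$, matching $\mu(\sigma)=\tfrac12-\sigma$ and $\mu(\sigma)=0$ respectively.

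For the critical strip $0\leq\sigma\leq1$ I would work with the entire function $F(s):=(s-1)\zeta(s)$, which is of finite order. On the three vertical lines $\sigma=0,\tfrac12,1$ the estimates above give $|F(\sigma+it)|\ll_\epsilon(|t|+1)^{m(\sigma)+1+\epsilon}$ with $m(0)=\tfrac12$, $m(\tfrac12)=\theta$, $m(1)=0$ (on $\sigma=1$ one uses that $F$ is regular at $s=1$, so $\zeta(1+it)\ll\log(|t|+2)$ for large $|t|$ causes no trouble and $F$ is bounded for bounded $|t|$). Applying Phragm\'en--Lindel\"of first on the strip $0\leq\sigma\leq\tfrac12$ and then on $\tfrac12\leq\sigma\leq1$ --- where the exponent is interpolated linearly between consecutive endpoint values --- gives $|F(\sigma+it)|\ll_\epsilon(|t|+1)^{\mu(\sigma)+1+\epsilon}$ throughout $0\leq\sigma\leq1$, since $\mu$ is exactly the piecewise-linear interpolant of $m(0),m(\tfrac12),m(1)$ (and it is convex, so the two linear pieces fit together consistently). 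Finally I would divide back: in the region $|\sigma+it-1|>\tfrac1{10}$ one has $|s-1|\geq\max(\tfrac1{10},|t|)$, so for $|t|\geq1$ we get $|\zeta(\sigma+it)|=|F(\sigma+it)|/|s-1|\ll_\epsilon(|t|+1)^{\mu(\sigma)+\epsilon}$, while for $|t|<1$ the set $\{\sigma+it:0\leq\sigma\leq1,\ |t|\leq1,\ |s-1|\geq\tfrac1{10}\}$ is compact and avoids $s=1$, so $\zeta$ is bounded there and the bound holds trivially because $\mu(\sigma)>0$ on $[0,1]$.

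The main obstacle is purely bookkeeping around the pole at $s=1$: one must check that the only place the restriction $|\sigma+it-1|>\tfrac1{10}$ actually bites is near $s=1$ (i.e.\ $\sigma$ close to $1$ and $t$ small), and that multiplying by $(s-1)$ before applying Phragm\'en--Lindel\"of and dividing afterwards loses nothing there --- this works because $|s-1|\geq|t|$ makes the extra factor harmless once $|t|\geq1$, and the remaining bounded-$t$ portion is handled by compactness. A secondary point is that all the edge estimates are genuinely uniform down to $t=0$ (not merely for $|t|\geq1$), which again is just continuity of $\zeta$ away from $s=1$. No analytic input beyond Huxley's bound, the functional equation, and Phragm\'en--Lindel\"of is required.
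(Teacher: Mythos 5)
Your argument is correct and is essentially the paper's proof written out in full: the paper likewise treats the bounds outside the critical strip as standard (citing Tenenbaum, II.3.4), inputs Huxley's estimate on the half-line, and gets the strip by ``convexity of $\mu$'', i.e.\ exactly the Phragm\'en--Lindel\"of interpolation applied to $(s-1)\zeta(s)$ that you carry out, with the same bookkeeping for the pole. The one point to tidy is uniformity for $\sigma$ slightly larger than $1$ with $|t|$ large, where $|\zeta(\sigma+it)|\le\zeta(\sigma)$ degenerates as $\sigma\to1^{+}$; the standard bound $\zeta(\sigma+it)\ll\log(|t|+2)$ valid uniformly for $\sigma\ge1$, $|t|\ge1$ (or one further convexity step on $1\le\sigma\le2$) closes this.
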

\begin{proof}
 For the values outside the critical strip, see for example section II.3.4 of \cite{tenenbaum}. In the critical strip, we use an estimate due to Huxley \cite{huxley}, which showed that $\zeta(1/2+it)\ll_{\epsilon} (|t|+1)^{\frac{32}{205}+\epsilon}$. The lemma then follows by convexity of $\mu$.

\end{proof}

\begin{remark}
\label{remarque lindelof}
 Under Lindelöf's hypothesis, the conclusion of Lemma \ref{bornes sur zeta} holds with $\theta=0$.
\end{remark}

\begin{lemma}[Perron's formula]
\label{formule de Perron}
Let $0<\kappa<1$, $y>0$ and define 
\begin{equation*}
 h(y):=\begin{cases} 
0 &\text{ if } 0<y<1 \\
1-\frac 1y & \text{ if } y\geq 1. 
       \end{cases}
\end{equation*}
We have 
\begin{equation*}
 h(y) = \frac 1 {2\pi i} \int_{(\kappa)} \frac{y^s}{s(s+1)} ds.
\end{equation*}
Moreover, for $T\geq 1$ and positive $y\neq 1$, we have the estimate
\begin{equation*}
 h(y) = \frac 1 {2\pi i} \int_{\kappa-iT}^{\kappa+iT} \frac{y^s}{s(s+1)} ds + O\left( \frac{y^{\kappa}}{T^2 |\log y|} \right).
\end{equation*}
Finally, for $y=1$,
\begin{equation*}
 0=h(1)=\frac 1 {2\pi i} \int_{\kappa-iT}^{\kappa+iT} \frac{ds}{s(s+1)} + O \left( \frac 1T \right).
\end{equation*}

\end{lemma}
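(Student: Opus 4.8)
The plan is to treat this as a standard contour-shifting argument, with the only real work being bookkeeping of the error terms. Write $F(s):=y^{s}/(s(s+1))$; this is meromorphic with simple poles only at $s=0$ and $s=-1$, and
\[
\operatorname*{Res}_{s=0}F(s)=1,\qquad \operatorname*{Res}_{s=-1}F(s)=\frac{y^{-1}}{-1}=-\frac1y .
\]
On any vertical line $\operatorname{Re}(s)=\sigma$ one has the crude bound $|F(s)|\ll y^{\sigma}/(|t|+1)^{2}$, so the improper integral $\frac1{2\pi i}\int_{(\kappa)}F(s)\,ds$ converges absolutely; this is what legitimises all the manipulations below, and it also makes the implied constants uniform.

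First I would prove the exact identity for $h(y)$. Fix $\kappa\in(0,1)$. When $y>1$, shift the line of integration to $\operatorname{Re}(s)=-N$ for a large non-integer $N$, picking up $2\pi i$ times the residues at $s=0$ and $s=-1$, i.e.\ $1-1/y$. The two horizontal segments pushed to height $\pm\infty$ contribute nothing since $|F|\ll y^{\sigma}/T^{2}$ there, and the far vertical segment at $\operatorname{Re}(s)=-N$ is $\ll y^{-N}N^{-2}\to0$ because $y>1$; hence $h(y)=1-1/y$. When $0<y<1$, shift instead to $\operatorname{Re}(s)=+N$: there are no poles to the right of $\operatorname{Re}(s)=\kappa$, the far vertical segment is $\ll y^{N}N^{-2}\to0$, and we get $h(y)=0$. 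This establishes the first displayed formula.

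Next, for the truncated formula with $T\geq1$ and $y\neq1$, I would run the same shift but with the horizontal segments at height $\pm T$ instead of at infinity. Letting $N\to\infty$ kills the far vertical segment, the residues again contribute exactly $h(y)$ by the first part, and the two horizontal segments are
\[
\ll \frac1{T^{2}}\int_{-\infty}^{\kappa} y^{\sigma}\,d\sigma \ll \frac{y^{\kappa}}{T^{2}|\log y|}\quad(y>1),\qquad
\ll \frac1{T^{2}}\int_{\kappa}^{\infty} y^{\sigma}\,d\sigma \ll \frac{y^{\kappa}}{T^{2}|\log y|}\quad(0<y<1),
\]
which is the asserted error term. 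I would note explicitly that merely truncating the vertical line at $\pm T$ only gives $O(y^{\kappa}/T)$; the improvement to $T^{-2}$ is exactly what completing the rectangle buys us.

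Finally, for $y=1$ the integrand does not decay in either horizontal direction, so instead of shifting I would integrate by partial fractions: using $\frac1{s(s+1)}=\frac1s-\frac1{s+1}$,
\[
\frac1{2\pi i}\int_{\kappa-iT}^{\kappa+iT}\frac{ds}{s(s+1)}=\frac1{2\pi i}\left[\log\frac{s}{s+1}\right]_{s=\kappa-iT}^{s=\kappa+iT},
\]
and since $\log\dfrac{\kappa\pm iT}{\kappa+1\pm iT}=\log\!\left(1-\dfrac1{\kappa+1\pm iT}\right)=O(1/T)$, the right-hand side is $O(1/T)$; together with $h(1)=1-1=0$ this is the last statement. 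The whole argument is routine; the only thing needing a modicum of care is the uniformity of the $O$-constants, which follows at once from the bound $|F(s)|\ll y^{\sigma}/(|t|+1)^{2}$, so I do not anticipate any genuine obstacle.
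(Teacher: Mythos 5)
Your proof is correct and follows essentially the same route as the paper: residue theorem on a rectangle shifted far to the left (for $y>1$) or right (for $0<y<1$), with the horizontal segments giving the $y^{\kappa}/(T^{2}|\log y|)$ error and the far vertical segment vanishing in the limit, and an explicit partial-fraction/logarithm computation for $y=1$. No issues.
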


\begin{proof}
 The first assertion is an easy application of the residue theorem. 

Now take $y > 1$. We have again by the residue theorem that for any large integer $K\geq 3$ and for $T\geq 1$,

\begin{align*}
 \frac 1 {2\pi i} \int_{\kappa-iT}^{\kappa+iT} \frac{y^s}{s(s+1)} ds-h(y) = \frac 1 {2\pi i}\left( \int_{\kappa-iT}^{\kappa-K-iT} +\int_{\kappa-K-iT}^{\kappa-K+iT} +\int_{\kappa-K+iT}^{\kappa+iT}\right) \frac{y^s}{s(s+1)} ds 
\\ \ll \frac 1{T^2} \int_{\kappa-K}^{\kappa} y^{\sigma} d\sigma +  \frac{y^{\kappa-K}}{|\kappa-K|^2} \int_{\kappa-K-iT}^{\kappa-K+iT} |ds| \ll \frac{y^{\kappa}}{T^2|\log y|}+T\frac{y^{\kappa-K}}{|\kappa-K|^2}.
\end{align*}
We deduce the second assertion of the lemma by letting $K$ tend to infinity. The proof is similar in the case $0<y<1$.

The last case remaining is for $y=1$. We have

\begin{align*}
 \frac 1 {2\pi i} \int_{\kappa-iT}^{\kappa+iT} \frac{ds}{s(s+1)} &= \frac 1 {2\pi i} \log \left(\frac{1+\frac 1{\kappa-iT}}{1+\frac 1{\kappa+iT}} \right) \\ &= \frac 1 {2\pi i} \log \left(1+O\left( \frac 1T\right) \right) \\ &= O\left( \frac 1T\right),
\end{align*}
which concludes the proof.

\end{proof}

The following is a crucial lemma estimating a weighted sum of the reciprocal of the totient function.

\begin{lemma}
\label{somme inverse euler avec poids}

Let $a\neq 0$ be an integer and $M\geq 1$ be a real number.

If $\omega(a_M)\geq 1$,
\begin{equation}
\label{enonce premier} 
\sum_{\substack{n\leq M\\ (n,a)=1}} \frac1{\phi(n)}\left( 1-\frac nM\right) =  C_1(a_M) \log M +  C_3(a_M) +\frac{\phi(a_M)}{a_M}\frac {\Lambda(a_M)}{2M} +E(M,a).
\end{equation}

If $a_M= 1$,
\begin{equation}
\label{enonce 1}
 \sum_{\substack{n\leq M\\ (n,a)=1}} \frac1{\phi(n)}\left( 1-\frac nM\right) =  C_1(1) \log M +  C_3(1) +\frac 12 \frac{\log M}{M}  + \frac{C_5}{M}+E(M,a).
\end{equation}

There exists $\delta>0$ such that the error term $E(M,a)$ satisfies

\begin{equation}
E(M,a) \ll_{\epsilon} \frac{\prod_{p\mid a_M} \left( 1+\frac 1{p^{\delta}}\right)} M \left(\frac{a_M} M \right)^{\frac{205}{538}-\epsilon}.
\end{equation}
\end{lemma}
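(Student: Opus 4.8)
The plan is to evaluate the weighted sum
$$S(M,a):=\sum_{\substack{n\le M\\(n,a)=1}}\frac1{\phi(n)}\Bigl(1-\frac nM\Bigr)$$
by writing $1-n/M=\frac1{2\pi i}\int_{(\kappa)}\bigl(M/n\bigr)^s\,\frac{ds}{s(s+1)}$ via Lemma \ref{formule de Perron}, so that $S(M,a)=\frac1{2\pi i}\int_{(\kappa)}F_a(s)\,\frac{M^s}{s(s+1)}\,ds$, where $F_a(s):=\sum_{(n,a)=1}\phi(n)^{-1}n^{-s}$ is the relevant Dirichlet series. First I would compute $F_a(s)$ as an Euler product: since $\phi(p^k)=p^{k-1}(p-1)$, for $p\nmid a$ the local factor is $1+\frac1{p-1}\sum_{k\ge1}p^{-ks}=1+\frac{p^{-s}}{(p-1)(1-p^{-s})}$, and one factors out $\zeta(s+1)$ to write $F_a(s)=\zeta(s+1)G_a(s)$ with $G_a(s)=\prod_{p\nmid a}\bigl(1+\tfrac{p^{-s-1}}{p-1-p^{-s}}\bigr)\cdot(\text{correction for }p\mid a)$, an Euler product that converges absolutely and is holomorphic in $\sigma>-\tfrac12$ (roughly), uniformly bounded there by something like $\prod_{p\mid a_M}(1+p^{-\delta})$ after pulling the $p\mid a$ factors out front.

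Next I would shift the contour from $\Re s=\kappa$ (say $\kappa$ a little positive) leftward to $\Re s=-\tfrac12-\eta$ for small $\eta>0$, using Lemma \ref{bornes sur zeta} for the growth of $\zeta(s+1)$ in vertical strips (with the Huxley exponent $\theta=32/205$) together with the $1/(s(s+1))$ decay to justify the horizontal segments and the convergence of the shifted integral. We pick up residues at the poles of the integrand inside the contour: the double pole of $\zeta(s+1)$ at $s=0$ (interacting with the $\tfrac1{s(s+1)}$ factor, which is regular there) produces the main terms $C_1(a_M)\log M+C_3(a_M)+\text{const}$; and the simple pole at $s=-1$ coming from $\tfrac1{s+1}$ contributes $-F_a(-1)M^{-1}=-\zeta(0)G_a(-1)M^{-1}$, which after evaluating $\zeta(0)=-\tfrac12$ and $G_a(-1)$ should yield exactly the $\tfrac{\phi(a_M)}{a_M}\tfrac{\Lambda(a_M)}{2M}$ term when $\omega(a_M)\ge1$, and the extra $\tfrac12\tfrac{\log M}M+\tfrac{C_5}M$ terms when $a_M=1$ (here the $a_M=1$ case differs because the local factor at $s=-1$ has its own pole/zero structure, changing the order of vanishing — this is why $C_5$, with its $\sum_p\tfrac{\log p}{p(p-1)}$, appears). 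The identification of $C_1,C_3,C_5$ with the residues is essentially the constant computation already sketched in Lemma \ref{somme inverse euler}, so I would cross-reference that and the Euler-product evaluations there.

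The residual integral on $\Re s=-\tfrac12-\eta$ is the error term: its size is
$$\ll\Bigl(\prod_{p\mid a_M}(1+p^{-\delta})\Bigr)M^{-1/2-\eta}\int_{-\infty}^{\infty}\frac{(|t|+1)^{\mu(1/2-\eta)+\epsilon}}{(|t|+1)^2}\,dt,$$
which converges provided $\mu(1/2-\eta)<1$, i.e. for $\eta$ small. To obtain the exponent $\tfrac{205}{538}-\epsilon$ one optimizes the shift: balancing $M^{-\sigma_0-1}$ (wait — more precisely, balancing the $M$-power $M^{-1-(\text{shift past }-1)}$ against the growth of $\zeta$) one is led to $\theta=32/205$ giving $\tfrac{205}{538}$ as $\tfrac12+\tfrac{32}{205}\big/\big(1+2\cdot\tfrac{32}{205}\big)$-type arithmetic — concretely one shifts to a line chosen so the error is $M^{-1}(a_M/M)^{\beta}$ with $\beta$ determined by where the convexity bound $\mu$ forces convergence, and the $a_M$-dependence enters because the $p\mid a$ Euler factors at that line are $\ll(a_M)^{\text{small}}$. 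The main obstacle is this last optimization together with tracking the $a_M$-uniformity cleanly: one must bound $G_a(s)$ on the shifted line with explicit dependence on $a_M$ (the $\prod_{p\mid a_M}(1+p^{-\delta})$ and the $(a_M/M)^{205/538-\epsilon}$ factors), which requires care since the $p\mid a$ local factors, when moved off the critical line, can be as large as a small power of $p$ each; and one must separately handle the two residue structures ($a_M=1$ vs.\ $\omega(a_M)\ge1$) since the local factor at $s=-1$ changes the pole order there.
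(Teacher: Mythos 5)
Your overall strategy (Perron's formula, Euler product, contour shift, residues) is the same as the paper's, but two concrete steps as written would fail. First, your factorization $F_a(s)=\zeta(s+1)G_a(s)$ is insufficient: the remaining Euler product $\prod_{p}\bigl(1+\tfrac{1}{(p-1)p^{s+1}}\bigr)$ diverges as $\Re s\to -1$ (its logarithm behaves like $\sum_p p^{-(s+2)}$), so $G_a(s)$ is neither holomorphic nor ``uniformly bounded'' in any half-plane reaching $\Re s=-1$, let alone beyond it. The paper factors out a second zeta, writing the series as $\mathfrak{S}_{a_M}(s)\zeta(s+1)\zeta(s+2)Z(s)$ with $Z(s)$ holomorphic for $\Re s>-3/2$; this is what makes the continuation past $s=-1$ legitimate and what reveals the true singularity at $s=-1$: a double pole of $\zeta(s+2)/(s+1)$, reduced by the order-$\omega(a_M)$ zero of $\mathfrak{S}_{a_M}$ there (no pole for $\omega(a_M)\ge 2$, simple pole with residue $\tfrac{\phi(a_M)}{a_M}\tfrac{\Lambda(a_M)}{2M}$ for $\omega(a_M)=1$, double pole giving $\tfrac{\log M}{2M}+\tfrac{C_5}{M}$ for $a_M=1$). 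Your description of this residue as $-F_a(-1)M^{-1}=-\zeta(0)G_a(-1)M^{-1}$ is not meaningful, since $G_a(-1)$ does not converge.

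Second, your contour placement is self-contradictory: you shift only to $\Re s=-\tfrac12-\eta$, which does not cross $s=-1$, yet you claim to collect the residue at $s=-1$; and an error term of size $M^{-1/2-\eta}$ would swamp all the secondary main terms of order $\tfrac{\log M}{M}$ and $\tfrac 1M$ that the lemma asserts. One must shift to $\Re s=\sigma$ with $-1-\tfrac{1}{2+4\theta}<\sigma<-1$ (where $\theta=\tfrac{32}{205}$, so $\tfrac{1}{2+4\theta}=\tfrac{205}{538}$). The constraint on $\sigma$ is not an ``optimization'' but the convergence condition $\mu(\sigma+1)+\mu(\sigma+2)<1$ for the $t$-integral, which involves the growth of \emph{both} zeta factors on the shifted line — your convergence check accounts only for $\zeta(s+1)$. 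Finally, the $a_M$-dependence $a_M^{-1-\sigma}\prod_{p\mid a_M}(1+p^{-\delta})$ comes from bounding the finite product $\mathfrak{S}_{a_M}(s)$ on that line; you gesture at this correctly but it only makes sense once the two-zeta factorization is in place.
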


\begin{remark}
 
Under Lindelöf's hypothesis, 
\begin{equation}
 E(M,a) \ll_{\epsilon} \frac{\prod_{p\mid a_M} \left( 1+\frac 1{p^{\delta}}\right)} M \left(\frac{a_M} M \right)^{1/2-\epsilon}.
\end{equation}
\end{remark}

\begin{proof}

Note first that we need only to consider the prime factors of $a$ less than or equal to $M$, since for $1\leq n \leq M$, $(n,a)=1 \Leftrightarrow (n,a_M)=1$.
 
To calculate our sum we will write it as a contour integral and shift contours, showing that the contribution of the shifted contours is negligible and obtaining the main terms from the residues at the poles.

Setting $\kappa=\frac 1 {\log M}$ in Lemma \ref{formule de Perron},

\begin{align}
\begin{split}
\label{M entierrr}
 \sum_{\substack{n\leq M\\ (n,a)=1}} &\frac1{\phi(n)} \left( 1-\frac nM\right) = \sum_{\substack{(n,a_M)=1}} \frac1{\phi(n)}h\left( \frac Mn\right) \\
&=\sum_{\substack{(n,a_M)=1}} \frac1{\phi(n)} \frac 1 {2\pi i} \int_{\kappa-iT}^{\kappa+iT}  \left(\frac Mn\right)^s  \frac{ds}{s(s+1)}\\  &\hspace{60pt}+O\left( \frac 1{T^2}\sum_{n\neq M} \frac 1{\phi(n) |\log M/n|}   \left(\frac Mn\right)^{\kappa} +\frac{\log M}{TM}   \right)
\\
&= \frac 1 {2\pi i} \int_{\kappa-iT}^{\kappa+iT} \left(\sum_{(n,a_M)=1}\frac 1 {n^s \phi(n)}  \right) \frac{M^s}{s(s+1)} ds + O_M\left( \frac{1}{T } \right).
\end{split}
\end{align}
In the last step we used the elementary estimates 
$$\sum_{n\leq M} \frac 1 {\phi(n)} \ll \log M  \hspace{0.5cm} \text{and} \hspace{0.5cm} \sum_{n > M} \frac 1 {n^\kappa \phi(n)} \ll \log M.$$ 
Now taking Euler products we compute that
\begin{equation}
\sum_{(n,a_M)=1}\frac 1 {n^s \phi(n)} = \mathfrak{S}_{a_M}(s)\zeta(s+1) \zeta(s+2)Z(s)
\end{equation}
where
\begin{equation}
 \mathfrak{S}_{a_M}(s):=\prod_{\substack{p\mid a\\ p\leq M}} \left( 1-\frac 1 {p^{s+1}} \right)\left( 1+\frac 1 {(p-1)p^{s+1}} \right)^{-1}
\end{equation}
and
\begin{equation}
\label{definition de Z}
 Z(s):=\prod_p \left(    1+\frac{1}{p(p-1)} \left(\frac 1 {p^{s+1}}-\frac 1 {p^{2s+2}}\right)\right),
\end{equation}
which converges for $\Re s>-3/2$. Therefore, \eqref{M entierrr} becomes
\begin{multline}
\label{equation 40}
 \sum_{\substack{n\leq M\\ (n,a)=1}} \frac1{\phi(n)}\left( 1-\frac nM\right) = \frac 1 {2\pi i} \int_{\kappa-iT}^{\kappa+iT} \mathfrak{S}_{a_M}(s)\zeta(s+1)\zeta(s+2) Z(s) \frac{M^s}{s(s+1)} ds \\+ O_{M}\left( \frac{1}{T } \right).
\end{multline}
The different results for different values of $\omega(a_M)$ come from the pole at $s=-1$. We see that $\mathfrak{S}_{a_M}(s)$ has a zero of order $\omega(a_M)$ at $s=-1$ whereas 
$$  \frac{\zeta(s+1)\zeta(s+2)}{s(s+1)} $$
has a pole of order two at $s=-1$. Hence the product has no pole if $\omega(a_M)\geq 2$, a pole of order one if $\omega(a_M)=1$, and a pole of order two if $\omega(a_M)=0$.
We now shift the contour of integration to the left until the line $\Re(s)=\sigma$, where $-1-\frac 1{2+4\theta}<\sigma < -1$ and $\theta:=\frac{32}{205}$. The right hand side of \eqref{equation 40} becomes
\begin{multline}
\label{bouger parcours d'integration}
 =P_T+ \frac 1 {2\pi i} \int_{\sigma-iT}^{\sigma+iT} \mathfrak{S}_{a_M}(s)\zeta(s+1)\zeta(s+2) Z(s) \frac{M^s}{s(s+1)} ds   \\+O_M\left(\frac 1T \right)+ O_a\left( \frac{(\log T)^2}{T^2}\left( T^{1/6}+\frac{T^{1/2}}{M^{1/2}}+\frac{T^{7/6}}{M} \right) \right).
\end{multline}
Here, $P_T$ denotes the sum of all residues in the box $\sigma  \leq \Re s \leq \frac 1 {\log M}$ and $|\Im s| \leq T $. The second error term in \eqref{bouger parcours d'integration} comes from the horizontal integrals which we have bounded using Lemma \ref{bornes sur zeta} (note that $\theta<1/6$). 
Taking $T\rightarrow \infty$ yields

\begin{equation}
\label{integrale}
 \sum_{\substack{n\leq M\\ (n,a)=1}} \frac1{\phi(n)}\left( 1-\frac nM\right)=P_{\infty}+ E(M,a),
\end{equation}
where $$E(M,a):=\frac 1 {2\pi i} \int_{(\sigma)} \mathfrak{S}_{a_M}(s)\zeta(s+1)\zeta(s+2) Z(s) \frac{M^s}{s(s+1)} ds.$$ 
Now on the line $\Re s = \sigma$ we have the bound (note that $0<-1-\sigma<\frac 1{2+4\theta}$)

$$\mathfrak S_{a_M}(s) \ll a_M^{-1-\sigma} \prod_{p\mid a_M} \left(1+\frac 1{p^{\delta}} \right),$$
for some $\delta>0$.
Combining this with Lemma \ref{bornes sur zeta} 
yields
\begin{align}
\begin{split}
\label{borne sur E(M,a) 1}
E(M,a) &\ll_{\sigma}  \prod_{p\mid a_M} \left(1+\frac 1{p^{\delta}} \right) a_M^{-1-\sigma}  \int_{-\infty}^{\infty} |\zeta(\sigma+1+it)||\zeta(\sigma+2+it)| \frac{M^{\sigma}}{(|t|+1)^2} dt \\
&\ll  \frac {\prod_{p\mid a_M} \left(1+\frac 1{p^{\delta}} \right)} M \left(\frac{a_M}M\right)^{-1-\sigma} \int_{-\infty}^{\infty} \frac{(|t|+1)^{1/2-(\sigma+1)} (|t|+1)^{2\theta (1-(\sigma+2))}}{(|t|+1)^2} dt
 \\  &\ll_{\sigma}  \frac {\prod_{p\mid a_M} \left(1+\frac 1{p^{\delta}} \right)}M \left(\frac{a_M}M\right)^{-1-\sigma},
\end{split}
\end{align}
since $1/2-(\sigma+1) + 2\theta (1-(\sigma+2)) <1$ by our choice of $\sigma$. The claimed bound on $E(M,a)$ then follows by taking $\sigma:=-1-\frac 1{2+4\theta}+\epsilon$ in \eqref{borne sur E(M,a) 1}.

It remains to compute $P_{\infty}$ which is the sum of the residues of $\mathfrak{S}_{a_M}(s)\zeta(s+1)\zeta(s+2) Z(s) \frac{M^s}{s(s+1)}$ in the region $\sigma  \leq \Re s \leq  \frac 1 {\log M}$. Note that $\mathfrak{S}_{a_M}(s)$ has poles on the lines $\Re s = -1-\frac{\log(p-1)}{\log p}$, however these poles are cancelled by the zeros of $Z(s)$. Thus the only possible singularities of $\mathfrak{S}_{a_M}(s)\zeta(s+1)\zeta(s+2) Z(s) \frac{M^s}{s(s+1)}$ in the region in question are at the points $s=0$ and $s=-1$.

Now a lengthy but straightfoward computation shows that we have a double pole at $s=0$ with residue equal to 
$ C_1(a_M) \log M +  C_3(a_M)$.

As for $s=-1$, we have to consider three cases.

If $ \omega(a_M)\geq 2$, then $\mathfrak{S}_{a_M}(s) = O((s+1)^2)$ around $s=-1$, so $\mathfrak{S}_{a_M}(s)\zeta(s+1)\zeta(s+2) Z(s) \frac{M^s}{s(s+1)}$ is holomorphic and we don't have any residue.

If $ \omega(a_M)= 1$, then $\mathfrak{S}_{a_M}(s)$ has a simple zero at $s=-1$ and thus $\mathfrak{S}_{a_M}(s)\zeta(s+1)\zeta(s+2) Z(s) \frac{M^s}{s(s+1)}$ has a simple pole with residue equal to 
$\frac{\phi(a_M)}{a_M}\frac{ \Lambda(a_M)}{2M}$.

Finally, if $a_M=1$, then $\mathfrak{S}_{a_M}(s)\equiv 1$ and thus $\mathfrak{S}_{a_M}(s)\zeta(s+1)\zeta(s+2) Z(s) \frac{M^s}{s(s+1)}$ has a double pole at $s=-1$ with residue equal to $\frac 12 \frac{\log M}M + \frac{C_5}{M} $.

\end{proof}

\section{Further results and proofs}
\label{section further results}

We will start by giving the fundamental result of this paper which works for $M$ fixed as well as for $M$ varying with $x$ under the condition $M\leq (\log x)^{O(1)}$.

\begin{proposition}
\label{M fixé}
Fix $A>B>0$ and $\lambda<\frac 14$, three positive real numbers. Let $M=M(x)$ be an integer such that $1\leq M(x)\leq(\log x)^B$.  For $a$ in the range $0<|a|\leq x^{\lambda}$ we have that
 \begin{multline}
\label{enonce M fixe}
\sum_{\substack{q\leq \frac x M  \\(q,a)=1}} \left( \psi(x;q,a)-\Lambda(a)-\frac{\psi(x)}{\phi(q)} \right) = x\Bigg({C}_1(a)\log M+{C}_3(a)-\sum_{\substack{r\leq M\\(r,a)=1}} \frac 1 {\phi(r)} \left( 1- \frac rM\right)\Bigg)\\+O_{A,B,\lambda}\left( \frac{\phi(a)}{a}2^{\omega(a)} \frac x {(\log x)^A} \right).
\end{multline}

We can remove the condition of $M$ being an integer at the cost of adding the error term $O(x\log \log M/M^2)$.

\end{proposition}

\begin{proof}
We will prove that
 \begin{multline}
\label{enonce M fixe avec x}
\sum_{\substack{q\leq \frac x M  \\(q,a)=1}} \left( \psi(x;q,a)-\Lambda(a)-\frac{x}{\phi(q)} \right) = x\Bigg({C}_1(a)\log M+{C}_3(a)-\sum_{\substack{r\leq M\\(r,a)=1}} \frac 1 {\phi(r)} \left( 1- \frac rM\right)\Bigg)\\+O_{A,B,\lambda}\left( \frac{\phi(a)}{a}2^{\omega(a)} \frac x {(\log x)^A} \right).
\end{multline}
From this we can deduce the proposition since by Lemma \ref{psi et x} the difference between the left hand side of \eqref{enonce M fixe} and that of \eqref{enonce M fixe avec x} is negligible. 

 Define $L:=(\log x)^{A+3}$. Partitioning the sum into dyadic intervals and applying Theorem \ref{FG} gives
\begin{equation}
\sum_{\substack{q\leq \frac xL \\(q,a)=1}} \left( \psi(x;q,a)- \Lambda(a)-\frac{x}{\phi(q)} \right) = O_{A,\lambda}\left( 2^{\omega(a)} \frac x {(\log x)^{A+1}} \right).
\label{partie petite de la somme}
\end{equation}
Therefore, we need to compute 
$$\sum_{\substack{\frac x{L} < q\leq \frac x M  \\(q,a)=1}} \left( \psi(x;q,a)-\Lambda(a) -\frac{x}{\phi(q)}\right),$$ 
which by lemmas \ref{psi et theta} and \ref{enlever *} is equal to
\begin{equation}
\label{somme qu'on va separer}
\sum_{\substack{\frac x{L} < q\leq \frac x M  \\(q,a)=1}} \left( \theta^*(x;q,a)-\frac{x}{\phi(q)}\right)+O(x^{2/3}+|a|(\log |a|)^2).
\end{equation} 
We split the sum in \eqref{somme qu'on va separer} in three distinct sums as following:
$$\sum_{\substack{\frac x{L} < q\leq x  \\(q,a)=1}} \theta^*(x;q,a)-\sum_{\substack{ \frac xM < q\leq x  \\(q,a)=1}} \theta^*(x;q,a) -x \sum_{\substack{\frac x{L} < q\leq \frac xM  \\(q,a)=1}} \frac  1 {\phi(q)}= I-II-III.$$
The third sum is easily treated using Lemma \ref{somme inverse euler}:
\begin{multline}
III=x\sum_{\substack{\frac x{L} < q\leq \frac xM  \\(q,a)=1}} \frac  1 {\phi(q)} = x\Bigg(   C_1(a) \log (x/M) +  C_1(a)+C_3(a) + O\left( 2^{\omega(a)}\frac{\log (x/M)}{x/M}\right) \\-  \left( C_1(a) \log (x/L) +  C_1(a)+C_3(a) + O\left( 2^{\omega(a)}\frac{\log (x/L)}{x/L} \right)\right) \Bigg)
\\ =  C_1(a) x\log(L/M) + O\left( 2^{\omega(a)}L \log x \right)  .
\label{somme euler 1}
\end{multline}
For the first sum, we have
\begin{equation*}
I= \sum_{\substack{\frac x{L} < q\leq x  \\(q,a)=1}} \theta^*(x;q,a) = \sum_{\substack{\frac x{L} < q\leq x  \\(q,a)=1}} \sum_{\substack{ |a|< p\leq x \\ p\equiv a \bmod q}} \log p .
\end{equation*}
Using Lemma \ref{interchanger les diviseurs},
\begin{align*}
 I &= \sum_{\substack{1\leq r < (x - a)\frac{L}{x}  \\(r,a)=1}} \sum_{\substack{ r\frac x{L} +a< p\leq x \\ p\equiv a \bmod r  \\ p> |a|}} \log p +O(|a|\log x)
\\&= \sum_{\substack{1\leq  r <(x - a)\frac{L}{x}  \\(r,a)=1}} \left(\theta^*(x;r,a)- \theta^*\left(r\frac x{L}+a;r,a\right) \right)+O(x^{1/3})
\\ &= \sum_{\substack{ 1\leq r < L - a \frac{L}{x} \\(r,a)=1}} \left(  \frac x {\phi(r)} - \frac {rx} { \phi(r) L}\right) + O(|a|L (\log x)^2)+O_A \left( L \frac x {L(\log x)^{A+1}}\right).
\end{align*}
In the last step we used Lemma \ref{enlever *} (with $|a|<x^{\lambda}$) combined with the Siegel-Walfisz theorem in the form  $\theta(x;r,a)-\frac x{\phi(r)} \ll_A \frac x{L(\log x)^{A+1}}$ for $r\leq 2 L$, as well as the estimate $\sum_{r\leq R} \frac {|a|} {\phi(r)} \ll |a|\log R$. As $\left| a \frac{L}{x}\right| \leq 1$ for $x$ large enough and $\phi(r)\gg r/\log\log r$, this gives
\begin{align*}
I &= x\sum_{\substack{ r <L  \\(r,a)=1}}  \frac 1 {\phi(r)}\left( 1- \frac{r}{L} \right) + O_A \left( \frac x {(\log x)^{A+1}}\right)+ O\left(  \frac {x\log\log L} {L}\right)
\\ &= x\sum_{\substack{ r <L  \\(r,a)=1}}  \frac 1 {\phi(r)}\left( 1- \frac{r}{L} \right) + O_A \left( \frac x {(\log x)^{A+1}}\right).
\end{align*}
We conclude the evaluation of $I$ by applying Lemma \ref{somme inverse euler}:
\begin{equation}
I= x\left( C_1(a) \log L +  C_3(a) + O_A\left(  \frac{2^{\omega(a)}}{(\log x)^{A+1}}\right) \right).
\end{equation}
Now with a similar computation using lemmas \ref{interchanger les diviseurs} and \ref{enlever *} as well as the Siegel-Walfisz theorem in the form  $\theta(x;r,a)-\frac x{\phi(r)} \ll_{A,B} \frac x{M (\log x)^{A+1}}$ for $r\leq 2M$, we show that 
\begin{align*}
 II &= \sum_{\substack{1\leq  r < (x - a)\frac{M}{x}  \\(r,a)=1}} \left(\theta^*(x;r,a)- \theta^*\left(r\frac x{M}+a,r,a\right) \right)+O(|a|\log x)
\\ &= \sum_{\substack{ 1\leq r < M - a \frac{M}{x} \\(r,a)=1}} \left(  \frac x{\phi(r)} - \frac {rx} { \phi(r)M}\right) + O(|a| M (\log x)^2)+O_{A,B}\left( M \frac x {M (\log x)^{A+1}}\right)
\\ &=  x\sum_{\substack{ 1\leq r \leq M \\(r,a)=1}} \frac 1{\phi(r)} \left(  1 - \frac rM \right) +O_{A,B} \left(  \frac x {(\log x)^{A+1}}\right).
\end{align*}
In the last step we have used that $M$ is an integer so the term for $r=M$ in the sum is given by $\frac 1 {\phi(M)} \left( 1-\frac MM\right) = 0$. If $M\notin \mathbb{N}$, we have to add the error term $ \frac x{\phi(\lfloor M\rfloor)} \left(  1 - \frac {\lfloor M\rfloor}M \right)= O(x \log\log M /M^2)$.

We conclude the proof by combining our estimates for $I$, $II$ and $III$ with the bound $\frac{a}{\phi(a)} \ll \log\log x$.
\end{proof}

\begin{proof}[Proof of Proposition \ref{M=1}]
 Follows from Proposition \ref{M fixé}.
\end{proof}

For $M$ not necessarily fixed, we will need to use Lemma \ref{somme inverse euler avec poids}. 

\begin{theorem}
\label{theoreme technique}
Let $M=M(x) \leq (\log x)^B$ (not necessarily an integer) where $B>0$ is a fixed real number. Fix $\lambda<\frac 14$ a positive real number, and let $a\neq 0$ be an integer such that $|a|\leq x^{\lambda}$.

If $\omega(a_M)\geq 1$,
\begin{equation}
\frac1{\frac{\phi(a)}{a}\frac{x}{M}} \sum_{\substack{q\leq \frac x {M}  \\(q,a)=1}} \left( \psi(x;q,a)-\Lambda(a)-\frac{\psi(x)}{\phi(q)} \right) = - \frac 12 \Lambda(a_R)+ E(M,a).
\end{equation}

If $a_M=1$,
\begin{equation}
\frac1{\frac{\phi(a)}{a}\frac{x}{M}}\sum_{\substack{q\leq \frac x M  \\(q,a)=1}} \left( \psi(x;q,a)-\Lambda(a)-\frac{\psi(x)}{\phi(q)} \right) = - \frac 12 \log M-C_5 +E(M,a).
\end{equation}

Under the assumption that $\sum_{\substack{p\mid a \\ p>M}} \frac{1} p \leq 1$, the error term $E(M,a)$ satisfies, for some $\delta>0$, 
\begin{multline}
\label{borne pour erreur dans thm technique}
E(M,a) \ll_{a,\epsilon,B}\frac{a}{\phi(a)}\prod_{p\mid a_M}\left( 1+\frac 1{p^{\delta}}\right) \left(\frac{a_M}{M}\right)^{\frac{205}{538}-\epsilon}+M\log M \sum_{\substack{p\mid a \\ p>M}} \frac{\log p} p  \\
+ \frac{2^{\omega(a)}}{(\log x)^{B+1}} + \frac a{\phi(a)} \frac{\log\log M}{M}.
\end{multline}
\end{theorem}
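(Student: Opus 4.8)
The plan is to deduce this from Proposition \ref{M fixé} together with Lemma \ref{somme inverse euler avec poids}; all of the analytic input — the shift of contour and the subconvexity bounds of Lemma \ref{bornes sur zeta} — is already packaged inside Lemma \ref{somme inverse euler avec poids}, so what remains is a reduction followed by elementary estimates comparing the constants attached to $a$ with those attached to $a_M$. First I would apply the non-integer version of Proposition \ref{M fixé} with a value of $A$ chosen large in terms of $B$. Dividing \eqref{enonce M fixe} by $\tfrac{\phi(a)}{a}\tfrac xM$ and using $M\le(\log x)^B$ to rewrite the two error terms — so that $\tfrac a{\phi(a)}\tfrac Mx\cdot 2^{\omega(a)}\tfrac x{(\log x)^A}\ll\tfrac{2^{\omega(a)}}{(\log x)^{B+1}}$ and the non-integer correction becomes $\tfrac a{\phi(a)}\tfrac{\log\log M}{M}$ — gives
\begin{multline*}
\frac1{\frac{\phi(a)}{a}\frac xM}\sum_{\substack{q\le x/M\\(q,a)=1}}\Bigl(\psi(x;q,a)-\Lambda(a)-\tfrac{\psi(x)}{\phi(q)}\Bigr)\\
=\frac a{\phi(a)}M\Bigl(C_1(a)\log M+C_3(a)-\sum_{\substack{r\le M\\(r,a)=1}}\tfrac1{\phi(r)}\bigl(1-\tfrac rM\bigr)\Bigr)\\
+O\Bigl(\tfrac{2^{\omega(a)}}{(\log x)^{B+1}}\Bigr)+O\Bigl(\tfrac a{\phi(a)}\tfrac{\log\log M}{M}\Bigr).
\end{multline*}

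Next I would insert Lemma \ref{somme inverse euler avec poids}. In the case $\omega(a_M)\ge1$ — which forces $M\ge2$ — substituting \eqref{enonce premier} turns the parenthesis into $(C_1(a)-C_1(a_M))\log M+(C_3(a)-C_3(a_M))-\tfrac{\phi(a_M)}{a_M}\tfrac{\Lambda(a_M)}{2M}-E_0(M,a)$, where $E_0(M,a)$ denotes the error term of that lemma. Multiplying through by $\tfrac a{\phi(a)}M$: since $\tfrac a{\phi(a)}\cdot\tfrac{\phi(a_M)}{a_M}=\prod_{p\mid a,\,p>M}\tfrac p{p-1}=1+O\bigl(\sum_{p\mid a,\,p>M}\tfrac1p\bigr)$ by the standing hypothesis (which also keeps the number of such primes bounded), the $\Lambda$-term contributes the main term $-\tfrac12\Lambda(a_M)$ plus $O\bigl(\Lambda(a_M)\sum_{p\mid a,\,p>M}\tfrac1p\bigr)$; and $\tfrac a{\phi(a)}M\cdot E_0(M,a)\ll_\epsilon\tfrac a{\phi(a)}\prod_{p\mid a_M}(1+p^{-\delta})\bigl(\tfrac{a_M}{M}\bigr)^{205/538-\epsilon}$, which is the first term of \eqref{borne pour erreur dans thm technique}. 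For the constant discrepancies I would use $C_1(a)=C_1(a_M)\prod_{p\mid a,\,p>M}(1-\tfrac1p)\bigl(1-\tfrac1{p^2-p+1}\bigr)$, giving $|C_1(a)-C_1(a_M)|\ll\sum_{p\mid a,\,p>M}\tfrac1p$, and write $C_3=C_1\cdot D$ with $D(a):=\gamma-1-\sum_p\tfrac{\log p}{p^2-p+1}+\sum_{p\mid a}\tfrac{p^2\log p}{(p-1)(p^2-p+1)}$, noting $D(a)-D(a_M)\ll\sum_{p\mid a,\,p>M}\tfrac{\log p}{p}$ and $|D(a_M)|\ll\log M$ by Mertens' estimate. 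Since $\log p\ge1$ and $\Lambda(a_M)\le\log M$ when $M\ge2$, all of these feed, after multiplication by $\tfrac a{\phi(a)}M$, into an error $O_a\bigl(M\log M\sum_{p\mid a,\,p>M}\tfrac{\log p}{p}\bigr)$, so the right-hand side equals $-\tfrac12\Lambda(a_M)+E(M,a)$ with $E(M,a)$ obeying \eqref{borne pour erreur dans thm technique}.

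In the case $a_M=1$ the condition $(r,a)=1$ is automatic for $r\le M$, so I would instead insert \eqref{enonce 1}, turning the parenthesis into $(C_1(a)-C_1(1))\log M+(C_3(a)-C_3(1))-\tfrac12\tfrac{\log M}{M}-\tfrac{C_5}{M}-E_0(M,a)$. After multiplication by $\tfrac a{\phi(a)}M$ the two explicit middle terms become $-\tfrac a{\phi(a)}\bigl(\tfrac12\log M+C_5\bigr)$; and since every prime dividing $a$ exceeds $M$, $\tfrac a{\phi(a)}=1+O\bigl(\sum_{p\mid a,\,p>M}\tfrac1p\bigr)$, so this equals $-\tfrac12\log M-C_5+O\bigl((\log M+1)\sum_{p\mid a,\,p>M}\tfrac1p\bigr)$; the constant differences and the $E_0$-term are bounded exactly as in the previous case (with $C_1(1),C_3(1)$ in place of $C_1(a_M),C_3(a_M)$ and with $(a_M/M)^{205/538-\epsilon}=M^{-(205/538-\epsilon)}$), and everything again fits inside \eqref{borne pour erreur dans thm technique}.

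The main obstacle is not any single hard estimate but the bookkeeping of the constants $C_1$, $C_3$, $C_5$: one must verify that the error incurred by replacing $a$ by $a_M$ inside $C_1,C_3$ and inside $\phi(a)/a$ is genuinely controlled by $M\log M\sum_{p\mid a,\,p>M}\tfrac{\log p}{p}$. This is exactly the step that requires the hypothesis $\sum_{p\mid a,\,p>M}\tfrac1p\le1$ — it is used both to bound $\prod_{p\mid a,\,p>M}\tfrac p{p-1}$ and $a/\phi(a)$ by $1+O\bigl(\sum_{p\mid a,\,p>M}\tfrac1p\bigr)$, and to guarantee that the number of prime factors of $a$ exceeding $M$ stays bounded, so that the implied constants in these comparisons do not depend on $a$.
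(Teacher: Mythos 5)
Your proposal is correct and follows essentially the same route as the paper: apply Proposition \ref{M fix�} with $A$ large in terms of $B$, substitute Lemma \ref{somme inverse euler avec poids} for the weighted sum, and control the replacement of $a$ by $a_M$ in $C_1$, $C_3$ and $\phi(a)/a$ via the hypothesis $\sum_{p\mid a,\,p>M}1/p\le 1$. The only cosmetic difference is that you merge the cases $\omega(a_M)\ge 2$ and $\omega(a_M)=1$ (harmless, since $\Lambda(a_M)=0$ in the former), and you correctly read the paper's $\Lambda(a_R)$ as a typo for $\Lambda(a_M)$.
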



\begin{remark}
If we assume Lindelöf's hypothesis, we can replace the first term of the right hand side of \eqref{borne pour erreur dans thm technique} by $\frac a{\phi(a)}\prod_{p\mid a_M}\left( 1+\frac 1{p^{\delta}}\right) \left(\frac{a_M}{M}\right)^{1/2-\epsilon}$.
\end{remark}

\begin{proof}[Proof of Theorem \ref{theoreme technique}]
 Our starting point will be to set $A:=2B+1$ in Proposition \ref{M fixé}:
\begin{multline}
\label{somme qu'on va remplacer par R}
\sum_{\substack{q\leq \frac x M  \\(q,a)=1}} \left( \psi(x;q,a)-\Lambda(a)-\frac{\psi(x)}{\phi(q)}  \right) = x\Bigg({C}_1(a)\log M+{C}_3(a)-\sum_{\substack{r\leq M\\(r,a)=1}} \frac 1 {\phi(r)} \left( 1-\frac rM\right)\Bigg)\\+O_{B}\left( \frac{\phi(a)}{a} \frac xM \frac{2^{\omega(a)}}{(\log x)^{B+1}}+x\frac{\log \log M}{M^2} \right).
\end{multline}


We now consider three cases depending on the number of prime factors of $a_M$.

Case 1: $\omega(a_M)\geq 2$, which implies $\omega(a)\geq 2$. Applying Lemma \ref{somme inverse euler avec poids} gives 
\begin{multline}
\sum_{\substack{q\leq \frac x M  \\(q,a)=1}} \left( \psi(x;q,a)-\Lambda(a)-\frac{\psi(x)}{\phi(q)} \right) \ll_B x\Big( | C_1(a_M)- C_1(a)| \log M +  | C_3(a_M)- C_3(a)| \\+\frac{\prod_{p\mid a_M}\left( 1+\frac 1{p^{\delta}}\right)} M \left(\frac{a_M} M \right)^{\frac{205}{538}-\epsilon} \Big) + \frac{\phi(a)}{a} \frac xM \frac{2^{\omega(a)}}{(\log x)^{B+1}}+x\frac{\log \log M}{M^2}.
\end{multline}
If all the prime factors of $a$ are less than or equal to $M$, then $| C_i(a_M)- C_i(a)|=0$. If not, we need upper bounds. By the definition of $C_1(a)$, 
\begin{align*}
| C_1(a_M)- C_1(a)| &= C_1(a)\Bigg(\prod_{\substack{p\mid a \\ p> M}} \left(1-\frac 1p\right)^{-1} \left(1-\frac 1{p^2-p+1}\right)^{-1}-1\Bigg) \\
&\ll \frac{\phi(a)}{a} \sum_{\substack{p\mid a \\ p>M}} \frac 1p,
\end{align*}
as long as $\sum_{\substack{p\mid a \\ p>M}} \frac 1p\leq 1$. Moreover,
\begin{align*}
 C_3(a_M)-C_3(a)&= \Bigg(C_1(a) +O\Big(  \frac{\phi(a)}{a} \sum_{\substack{p\mid a \\ p>M}} \frac 1p \Big)\Bigg) \frac{C_3(a_M)}{C_1(a_M)} -C_3(a)
\\  &= C_1(a) \left( \frac{C_3(a_M)}{C_1(a_M)} -\frac{C_3(a)}{C_1(a)}\right) + O\Bigg(  \frac{\phi(a)}{a} \sum_{\substack{p\mid a \\ p>M}} \frac{\log M}p  \Bigg),
\end{align*}
since $\sum_{p\mid a_M} \frac{p^2 \log p}{(p-1)(p^2-p+1)} \ll \sum_{p\leq M} \frac{\log p}p \ll \log M$. Thus,
\begin{align*}
 |C_3(a_M)-C_3(a)| &\ll \frac{\phi(a)}{a}\sum_{\substack{p\mid a \\ p> M}} \frac{p^2 \log p}{(p-1)(p^2-p+1)}+\frac{\phi(a)}{a} \sum_{\substack{p\mid a \\ p>M}} \frac{\log M}p  \\  &\ll \frac{\phi(a)}{a}  \sum_{\substack{p\mid a \\ p>M}} \frac{\log p}p .
\end{align*}
Putting all this together and dividing by $\frac xM\frac{\phi(a)}{a}$ gives the claimed estimate.

Case 2: $\omega(a_M)=1$. The calculation is similar, however Lemma \ref{somme inverse euler avec poids} gives a contribution of $$-\frac 12 \frac{a}{\phi(a)} \frac{\phi(a_M)}{a_M}\Lambda(a_M)=-\frac 12\Lambda(a_M) + O\Bigg( \log M \sum_{\substack{p \mid a \\ p>M}} \frac 1p \Bigg),$$
since $\Lambda(a_M) \leq \log M$.

Case 3: $a_M=1$. The contribution of $$\frac a{\phi(a)}\left(-\frac 12 \log M -C_5\right)=-\frac 12 \log M -C_5+O\Bigg( \log M\sum_{\substack{p \mid a \\ p>M}} \frac 1p \Bigg)$$ comes from Lemma \ref{somme inverse euler avec poids}.

\end{proof}

\begin{remark}
One can give an complex analytic explanation of why the main terms all cancel out in the proofs of Proposition \ref{M fixé} and Theorem \ref{theoreme technique}. Indeed, it amounts to the fact that the Mellin transform 
$$\psi(s):=\frac {L^s-M^s}{s+1}
+\left(\frac{x}{L}\right)^s-\left(\frac{x}{M}\right)^s $$
has a double zero at $s=0$ (see the proof of Proposition 5.1 of \cite{fiorilli}).
\end{remark}

\begin{proof}[Proof of Theorem \ref{premier théorème}]

It is a particular case of Theorem \ref{theoreme technique}. The theorem is trivial if $M$ is bounded. For $M$ tending to infinity with $x$, we have that $a_M=a$ for $x$ large enough, since $a$ is fixed. Hence, the error term in Theorem \ref{theoreme technique} satisfies $E(M,a)\ll_{a,\epsilon,B} M^{-\frac{205}{538}+\epsilon}$.
\end{proof}

\section{Concluding remarks}

\begin{remark}
 One could ask if the results of Theorem \ref{premier théorème} are intrinsic to the sequence of prime numbers or if they are just a result of the weight $\Lambda(n)$ in the prime counting functions. However one can see that if we replace $\psi(x;q,a)$ by $\pi(x;q,a)$ and $\psi(x)$ by $\pi(x)$, the proof of Proposition \ref{M fixé} will go through with $x$ replaced by $Li(x)$ and an additional error term of $$O\left( x\frac{\log\log x}{(\log x)^2}\right).$$
(This is because in the evaluation of $II$ in the proof of Proposition \ref{M fixé}, we have for $\frac 1{\log x} \leq y\leq 1+o(1)$ that $Li(yx)-y Li(x) \ll \frac{|\log y|}{(\log x)^2}$, a bound which cannot be improved for small values of $y$.)
 One has to prove the analogue of Theorem \ref{FG} which can be done using the Bombieri-Vinogradov theorem and a very delicate summation by parts. We conclude that an analogue of Theorem \ref{premier théorème} holds with the natural prime counting functions in the range $M= o\left(\frac{\log x}{\log\log x}\right)$.
\end{remark}

\begin{theorem}
\label{premier théorème avec pi}
Fix an integer $a\neq 0$ and $\epsilon>0$. If $M=M(x)$ is such that $M= o\left(\frac{\log x}{\log\log x}\right)$, then

\begin{equation*}
\frac1{\frac{\phi(a)}{a}\frac{Li(x)}{M} }\sum_{\substack{q\leq \frac{x} {M}  \\(q,a)=1}} \left( \pi(x;q,a)- \frac{\vartheta(a)}{\log a}-\frac{\pi(x)}{\phi(q) } \right) = \mu(a,M)+O_{a,\epsilon} \left(\frac{1}{M^{\frac{205}{538}-\epsilon}}+\frac{M \log\log x}{\log x}\right),
\end{equation*}
where $\mu(a,M)$ is defined as in Theorem \ref{premier théorème}.
\end{theorem}

\begin{remark}

Using Theorem \ref{theoreme technique}, one can show that Theorem \ref{premier théorème} holds for many (not necessarily fixed) values of $a$:
\begin{theorem}
\label{théorème proportion positive}
Fix $B$, $\lambda< \frac 14$ and $\eta\leq \frac 15$, three positive real numbers. We have for $M\leq (\log x)^B$ that the proportion of integers $a$ in the range $0<|a|\leq x^{\lambda}$ for which 
\begin{equation}
\label{equation du thm prop positive}
\frac1{\frac{\phi(a)}{a}\frac{x}{M}  }\sum_{\substack{q\leq \frac x M  \\(q,a)=1}} \left( \psi(x;q,a)-\frac{\psi(x)}{\phi(q)} \right) = O_{B,\lambda}\left(\frac  1 {M^{\eta}} \right)
\end{equation}
is at least $\frac{1-\frac{21}{8}\eta}{1+\eta+\eta^2}e^{-\gamma}$,
where $\gamma$ is the Euler-Mascheroni constant.
\end{theorem}

We now sketch an argument showing that the proportion of integers $a\leq x^{\lambda}$ for which the first term of the right hand side of \eqref{borne pour erreur dans thm technique} is $\leq 1$ is not more than $e^{-\gamma}$. Setting $X:=x^{\lambda}$ and using that $M\ll (\log X)^{O(1)}$, we compute

\begin{equation*}
 \#\Big\{ a\leq X : \prod_{\substack{p\mid a \\ p\leq M}} p \leq M \Big\} = \sum_{\substack{n\leq M \\ \mu^2(n)=1}} \# \Big\{ a \leq X : n\mid a, \Big( \frac an,\prod_{\substack{p\leq M \\ p \nmid n}}p \Big)=1 \Big\},
\end{equation*}
which by the fundamental lemma of the combinatorial sieve is

\begin{equation*}
\sim \sum_{\substack{n\leq M \\ \mu^2(n)=1}} \frac Xn \prod_{\substack{p\leq M \\ p \nmid n}} \left( 1-\frac 1p \right)  \sim \frac{X e^{-\gamma}}{\log M} \sum_{n\leq M} \frac{\mu^2(n)}{\phi(n)} \sim X e^{-\gamma}.
\end{equation*}
Therefore, to extend the proportion of integers $a$ for which we get an admissible error term in Theorem \ref{théorème proportion positive}, one would need to improve the bound \eqref{borne pour erreur dans thm technique} for $E(M,a)$.

It would be interesting to either extend this proportion to $1$, or to show that this is impossible, that is to show that that the left hand side of \eqref{equation du thm prop positive} is $\Omega(1)$ for a positive proportion of $a$ in the range $0<|a|\leq x^{\lambda}$.
\end{remark}

\begin{remark}
If we consider dyadic sums, that is we sum over the moduli $\frac x{2M}<q\leq \frac xM$ with $(q,a)=1$ as in \eqref{enonce premier thm dyadique}, then we can improve the range of $a$ in our results to $|a|\leq \frac x{(\log x)^C}$ for a large enough constant $C$, since in this case we do not need to appeal to Theorem \ref{FG}. (It was already remarked by the authors in \cite{FG} that if $Q$ is not in the interval $[x^{\frac 12-\epsilon},x^{\frac 12+\epsilon}]$, then Theorem \ref{FG} holds in a much wider range of $a$.)
\end{remark}

\begin{remark}
If we assume GRH and we use dyadic intervals, then we can improve the range of $M$ in Theorem \ref{premier théorème}. Indeed we can show that for $M=M(x)=o(x^{\frac 14}/\log x)$ and for any $\epsilon>0$,
\begin{multline}
\label{enonce premier thm dyadique}
\frac1{\frac{\phi(a)}{a}\frac{x}{2M}  }\sum_{\substack{\frac x{2M}<q\leq \frac{x} {M}  \\(q,a)=1}} \left( \psi(x;q,a)-\Lambda(a)-\frac{\psi(x)}{\phi(q)}\right) = 2\mu(a,M)-\mu(a,2M)\\
+O_{a,\epsilon} \left(\frac{M^2}{\sqrt x} (\log x)^2+\frac{1}{M^{\frac {1}{2}-\epsilon}}\right).
\end{multline}
If we assume Montgomery's hypothesis, then we can show that the error term in \eqref{enonce premier thm dyadique} is $o(1)$ for $M\leq x^{\frac 13-\epsilon}$, for any fixed $\epsilon>0$.
\end{remark}

\begin{remark}
What the proof of Theorem \ref{premier théorème} boils down to is the distribution of primes in arithmetic progressions, rather than depending on the primes themselves. For this reason, one can generalize Theorem \ref{premier théorème} to many arithmetic sequences that are well distributed in arithmetic progressions. This will be done in a forthcoming article \cite{fiorilli}.

\end{remark}

\end{document}